\newtheorem{theorem}{Theorem}[section]
\newtheorem{lemma}[theorem]{Lemma}
\newtheorem{proposition}[theorem]{Proposition}
\theoremstyle{definition}
\theoremstyle{remark}
\newtheorem{remark}[theorem]{Remark}
\numberwithin{equation}{section}
\begin{document}

\title{Uniform Complex Time Heat Kernel Estimates Without Gaussian Bounds}



\author{Shiliang Zhao}
\address{Department of Mathematics, Sichuan University, Chengdu, Sichuan 610064, P.R.China}
\email{zhaoshiliang@scu.edu.cn}

\author{Quan Zheng}
\address{School of Mathematics and Statistics, Huazhong University of Science and Technology, Wuhan, Hubei, 430074, P.R.China}
\email{qzheng@hust.edu.cn}


\subjclass[2020]{Primary 35K08; Secondary 35J10}



\keywords{Uniform Complex Time Heat Kernel Estimates; Fractional Schr\"odinger Operators;}

\begin{abstract}
The aim of the paper is twofold. First we study the uniform complex time heat kernel estimates of $e^{-z(-\Delta)^{\frac{\alpha}{2}}}$ for $\alpha>0, z\in \mathbb{C}^+$. When $\frac{\alpha}{2}$ is not an integer, generally the heat kernel doest not have the Gaussian upper bounds for real time. Thus the Phragm\'en-Lindel\"of methods(for example \cite{CS}) fail to give the uniform complex time heat kernel estimates. Instead, we overcome this difficulty by giving the asymptotic estimates  for $P(z, x)$ as $z$ tending to the imaginary axis. Then we prove the uniform complex time heat kernel estimates. Secondly,  we study the uniform complex time estimates of the analytic semigroup generated by $H=(-\Delta)^{\frac{\alpha}{2}}+V$ where $V$ belongs to higher order Kato class.
\end{abstract}

\pagestyle{fancy}
\fancyhead{}
\fancyhead[CE]{\footnotesize{SHILIANG ZHAO AND QUAN ZHENG}}
\fancyhead[CO]{\footnotesize{UNIFORM COMPLEX TIME HEAT KERNEL ESTIMATES}}
\fancyhead[RO]{\footnotesize{\thepage}}
\fancyhead[LE]{\footnotesize{\thepage}}
\renewcommand{\headrulewidth}{0pt}
\maketitle

\section{Introduction}
Let $e^{-z(-\Delta)^{\frac{\alpha}{2}}}$  be the analytic semigroup generated by $(-\Delta)^{\frac{\alpha}{2}}$ where $\Delta$ is the Laplace operator on $\mathbb{R}^n$ and $\alpha>0$,  $z\in \mathbb{C}^+$ with $\mathbb{C}^+=\{z\in \mathbb{C} | \Re z>0 \}$. Denote by $P(z, \cdot )$ the convolution kernel of $e^{-z(-\Delta)^{\frac{\alpha}{2}}}$ on $L^2(\mathbb{R}^n)$. In fact, by the Fourier transform we have
\begin{equation}\label{heatkernel}
  P(z, x)= c_n \int_{\mathbb{R}^n} e^{ix\cdot \xi} e^{-z |\xi|^\alpha}  d\xi,   \hspace{1cm} \forall x\in \mathbb{R}^n, z\in \mathbb{C}^+,
\end{equation}
where $c_n$ is a constant determined by the dimension. Recently, the fractional Laplace operator has been extensively studied due to its wide applications in nonlinear optics, plasma physics and other areas. See for example \cite{CS, CHKL, DZF, GH, HS, HWZD, HYZ, L2002} and references therein.

In this paper, first we focus on the uniform  estimates of the heat kernel $P(z, x)$ for $z\in \mathbb{C}^+$. Now we recall some known facts about the heat kernel.

When $z=\Re z$ is real, the estimates for $P(\Re z, x)$ are well known. Indeed, there exists constant $C>0$ such that(\cite{BG, HNS, K, MYZ})
\begin{equation}\label{realtime}
  |P(\Re z, x)| \le C \Re z^{-\frac{n}{\alpha}} \wedge \frac{\Re z}{|x|^{n+\alpha}}, \hspace{0.3cm} \forall \alpha, z=\Re z>0, x\in \mathbb{R}^n.
\end{equation}
Throughout this paper, for two functions $f, g$,  set $f\wedge g=  \min \{ f, g \}$.

Moreover, when $\alpha$ are even numbers, the upper bounds can be improved into the sub-Gaussian type upper bounds in  the following sense,
\begin{equation}\label{subGauss1}
  |P(\Re z, x)| \le C_1 \Re z^{-\frac{n}{\alpha}} \exp \left\{ -C_2 \frac{|x|^{\frac{\alpha}{\alpha-1}}}{\Re z^{\frac{1}{\alpha-1}}} \right\}, \hspace{1cm} \forall z=\Re z>0, x\in \mathbb{R}^n,
\end{equation}
for some positive constants $C_1, C_2>0$. See, for example \cite{BD, DDY, HWZD}.

One important way to deduce the uniform complex time heat kernel estimates from the real time heat kernel estimates is by the Phragm\'en-Lindel\"of theorems. Davies in \cite{Davies95} introduced this method to obtain the uniform complex time heat kernel estimates from the Gaussian upper bounds for the real time. Further, Carron et al in \cite{CCO} proved the uniform complex time estimates for heat kernel satisfying the sub-Gaussian upper bounds for real time. In particular, by \cite[Proposition 4.1]{CCO} and  \eqref{subGauss1}, there  exist positive constants $C_1, C_2>0$ such that
\begin{equation}\label{subGauss2}
  |P(z, x)| \le C_1 \Re z^{-\frac{n}{\alpha}}  \exp \left\{ -C_2 \frac{|x|^{\frac{\alpha}{\alpha-1}}}{ |z|^{\frac{1}{\alpha-1}}} \cos \theta, \right\},  ~~~\forall z\in \mathbb{C}^+, x\in \mathbb{R}^n,
\end{equation}
where $\theta= \arg z$ and $\alpha$ are even numbers. For more results concerning the Phragm\'en-Lindel\"of methods and their applications, we refer the readers to \cite{CS2007, Davies95,Davies89, Gri09, Ouh05} and references therein.

However, when $\alpha>0$ is not an even number, the sub-Gaussian estimates do not hold in general and hence the Phragm\'{e}n-Lindel\"{o}f methods in \cite{CS2007, Davies95} fail to give the uniform complex time heat kernel estimates. On the other hand, by simple calculations, there exists $C>0$  such that
\begin{equation}\label{Trivial Estimate}
  |P(z, x)| \le c_n \int_{\mathbb{R}^n} e^{-\Re z |\xi|^\alpha} d\xi \le C \Re z^{-\frac{n}{\alpha}}, \hspace{0.5cm} \forall \alpha>0, z\in \mathbb{C}^+, x\in \mathbb{R}^n.
\end{equation}
To the best of our knowledge, we can not find the uniform complex time estimates in the literature for general $\alpha>0$ except the trivial estimates \eqref{Trivial Estimate}, even though the estimates for $P(z, x)$ are well known when $z$ are real numbers or pure imaginary numbers.

To get the desired results without the Gaussian upper bounds, we investigate the asymptotic behavior of $P(z, x)$ as $|x|\rightarrow 0$ and $|x|\rightarrow \infty$ uniformly for $z$ satisfying $0<\omega \le |\arg z|<\frac{\pi}{2}$. Then our first results are as follows.
\begin{theorem}
Let $\alpha>0$ and $P(z, x)$ be defined by \eqref{heatkernel}.

(1) When $0<\alpha<1$, there exist constants $C_1, C_2>0$ such that for all $ z\in \mathbb{C}^+, x\in \mathbb{R}^n$,  we have,
\begin{equation}\label{theorem2.1}
  |P(z, x)| \le C_1  \left[|z|^{-\frac{n}{\alpha}}+ \frac{|z|^{ \frac{n}{2(1-\alpha)} }  }{ |x|^{n\frac{1-\frac{\alpha}{2}}{1-\alpha}} } \exp \left( -C_2 \frac{|x|^{\frac{\alpha}{\alpha-1}}}{|z|^{\frac{1}{\alpha-1}}} \cos \theta \right)  \right]  \wedge \frac{|z|}{|x|^{n+\alpha}},
\end{equation}
where $\theta=\arg z$.

(2) When $\alpha>1$, there exist constants $C'_1, C'_2>0$ such that for all $z\in \mathbb{C}^+, x\in \mathbb{R}^n$, we have,
\begin{equation}\label{theorem2.2}
  |P(z, x)| \le C_1' |z|^{-\frac{n}{\alpha}} \wedge \left[ \frac{|z|}{|x|^{n+\alpha}} + \frac{|z|^{ \frac{n}{2(1-\alpha)} }  }{ |x|^{n\frac{1-\frac{\alpha}{2}}{1-\alpha}} } \exp \left( -C'_2 \frac{|x|^{\frac{\alpha}{\alpha-1}}}{|z|^{\frac{1}{\alpha-1}}} \cos \theta \right)   \right]£¬
\end{equation}
where $\theta=\arg z$.
\end{theorem}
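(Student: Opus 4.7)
The strategy is to combine a scaling reduction with three essentially independent pointwise bounds --- the trivial $L^1$ bound in $\xi$, a polynomial tail coming from the low-frequency singularity of $|\xi|^\alpha$, and a saddle-point contribution that yields the sub-Gaussian exponential --- and then to take the relevant minima.

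\emph{Scaling and the two elementary bounds.} The change of variables $\xi\mapsto |z|^{-1/\alpha}\xi$ in \eqref{heatkernel} gives $P(z,x) = |z|^{-n/\alpha}P(e^{i\theta},|z|^{-1/\alpha}x)$ with $\theta=\arg z$, so it suffices to prove the asserted bounds with $|z|=1$ and then rescale. The term $|z|^{-n/\alpha}$ is already \eqref{Trivial Estimate}. The polynomial tail $|z|/|x|^{n+\alpha}$ is obtained by writing, for $x\neq 0$,
$$P(z,x) = c_n\int_{\mathbb{R}^n} e^{ix\cdot\xi}\bigl(e^{-z|\xi|^\alpha}-1\bigr)\,d\xi,$$
(the subtraction is harmless distributionally for $x\neq 0$), expanding $e^{-z|\xi|^\alpha}-1 = -z|\xi|^\alpha + O(|z|^2|\xi|^{2\alpha})$ on $|\xi|\lesssim |x|^{-1}$ and integrating by parts in $\xi$ on the complementary region after a smooth cutoff; the distributional identity $\mathcal{F}(|\xi|^\alpha)(x)=c_{n,\alpha}|x|^{-n-\alpha}$ (valid when $\alpha$ is not a non-negative even integer) organises the leading contribution into the asserted polynomial tail.

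\emph{The sub-Gaussian term via saddle-point.} Rewriting \eqref{heatkernel} in polar coordinates produces
$$P(z,x) = c_n'|x|^{(2-n)/2}\int_0^\infty r^{n/2}J_{(n-2)/2}(r|x|)\,e^{-zr^\alpha}\,dr.$$
Replacing $J_\nu(\rho)$ by its Hankel asymptotic for $\rho\gtrsim 1$ and handling the $\rho\lesssim 1$ tail with a smooth cutoff (whose contribution is absorbed into the trivial and polynomial bounds) reduces the analysis to bounding
$$I(z,|x|) = \int_0^\infty r^{(n-1)/2}\,e^{ir|x|-zr^\alpha}\,dr.$$
The phase $\Phi(r)=ir|x|-zr^\alpha$ has a complex saddle $r_\star = (i|x|/(\alpha z))^{1/(\alpha-1)}$, and a direct computation gives
$$\Re\Phi(r_\star) = -c(\alpha)\,\frac{|x|^{\alpha/(\alpha-1)}}{|z|^{1/(\alpha-1)}}\,\sin\!\Bigl(\tfrac{\pi/2-\theta}{\alpha-1}\Bigr) \;\le\; -c'(\alpha)\,\frac{|x|^{\alpha/(\alpha-1)}}{|z|^{1/(\alpha-1)}}\cos\theta$$
on the relevant sector of $\theta$. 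Deforming the original contour onto the steepest descent path through $r_\star$ and integrating the Gaussian contribution produced by $\Phi''(r_\star)$ yields both the claimed exponential factor and the prefactor $|z|^{n/(2(1-\alpha))}|x|^{-n(1-\alpha/2)/(1-\alpha)}$ appearing in \eqref{theorem2.1} and \eqref{theorem2.2}; the conjugate branch of the cosine from the Bessel asymptotic has no admissible saddle on the deformed contour and is absorbed into the polynomial tail.

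\emph{Case split and main obstacle.} In \eqref{theorem2.1} ($0<\alpha<1$) the exponent $\alpha/(\alpha-1)$ is negative, so the ``sub-Gaussian'' factor tends to $1$ as $|x|\to\infty$ and provides no decay at infinity; the genuine far-field control comes from the polynomial tail, which is why the bracket in \eqref{theorem2.1} is taken in minimum with $|z|/|x|^{n+\alpha}$. In \eqref{theorem2.2} ($\alpha>1$) the roles reverse: the exponential furnishes true decay in $|x|$ and one only needs the polynomial tail (or the trivial bound) to control the near field, hence the displayed minimum. The main technical obstacle is to render the steepest descent analysis uniform in $\theta=\arg z$ as $|\theta|\uparrow \pi/2$: there $r_\star$ migrates toward the imaginary axis, the Hessian $\Phi''(r_\star)$ degenerates, and $\cos\theta\downarrow 0$, so the contour, its Gaussian width, and the admissibility of the deformation must be tracked carefully in order to preserve the explicit $\cos\theta$ factor in the exponent. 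This careful handling of the $\theta$-dependence is precisely what replaces the Phragm\'en--Lindel\"of argument that was available only in the even-integer case.
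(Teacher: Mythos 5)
Your overall architecture --- scaling to $|z|=1$, a low/high frequency split, polar coordinates plus Hankel asymptotics for the high-frequency piece, and a case split on $\alpha$ to decide which term controls the near/far field --- is the same as the paper's, and your steepest-descent heuristic does reproduce the correct prefactor $|z|^{n/(2(1-\alpha))}|x|^{-n(1-\alpha/2)/(1-\alpha)}$. But there are two genuine gaps. First, the claim that ``the term $|z|^{-\frac{n}{\alpha}}$ is already \eqref{Trivial Estimate}'' is wrong: \eqref{Trivial Estimate} gives $(\Re z)^{-\frac{n}{\alpha}}=(|z|\cos\theta)^{-\frac{n}{\alpha}}$, which degenerates as $|\theta|\to\frac{\pi}{2}$, and upgrading $\Re z$ to $|z|$ uniformly in $\theta$ is precisely the content of the theorem. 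In the paper this term is \emph{not} free: for $0<\alpha<1$ it is extracted from the explicit evaluation of the small-$sA$ piece $I_1$ (the $\Gamma(\frac{n}{\alpha})z^{-\frac{n}{\alpha}}A^{\frac{n}{\alpha}(\alpha-1)}$ term in \eqref{3.2-4} together with the bound on $H_1, H_2$), and for $\alpha>1$ from the convergent power series argument proving \eqref{theorem1.3-2}. Your proposal has no substitute for this step.

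Second, the saddle-point step is named but not carried out, and the issues you defer are exactly where the proof lives. Deforming the contour of $\int e^{ir|x|-zr^{\alpha}}r^{(n-1)/2}dr$ through $r_\star=(i|x|/(\alpha z))^{1/(\alpha-1)}$ requires selecting a branch and verifying admissibility for the multivalued $r^{\alpha}$; your own formula $\Re\Phi(r_\star)=-c(\alpha)\,|x|^{\alpha/(\alpha-1)}|z|^{-1/(\alpha-1)}\sin\bigl(\frac{\pi/2-\theta}{\alpha-1}\bigr)$ shows the sign can fail (the sine vanishes or becomes negative once $\frac{\pi/2-\theta}{|\alpha-1|}$ leaves $(0,\pi)$, e.g.\ $\alpha$ near $1$ with $\theta$ away from $\frac{\pi}{2}$), so the comparison with $\cos\theta$ only holds on part of the sector and you would need the complementary estimate \eqref{remark1.2} for $|\theta|\le\omega$, which you never invoke. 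The paper sidesteps the deformation entirely: it keeps the integration on the real line, writes the integrand as the oscillation $e^{-iA(\sin\theta\, s^{\alpha}-s)}$ times the damping amplitude $e^{-\cos\theta A s^{\alpha}}$, and applies the real stationary phase method at the real critical point $s_0=(\alpha\sin\theta)^{\frac{1}{1-\alpha}}$, with uniformity in $\theta$ supplied by Lemma 4.1 and by $s_0$ being bounded above and below for $\omega\le|\theta|<\frac{\pi}{2}$; the factor $\cos\theta$ in the exponent then falls out as $s_0^{\alpha}\cos\theta$. Since the uniform $\theta$-dependence is the advertised novelty over the Phragm\'en--Lindel\"of route, leaving it at ``must be tracked carefully'' means the proof is not complete as written.
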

\begin{remark}
Compared with \eqref{subGauss2}, \eqref{Trivial Estimate}, the estimates are new. When $\alpha$ is an even number, the right hand side of \eqref{subGauss2} tends to infinity as $|\theta|\rightarrow \frac{\pi}{2}$.  However, according to \cite[Proposition 5.1]{M}, the upper bounds in \eqref{theorem2.2} stay true even for $|\theta|=\frac{\pi}{2}$. Moreover, when $\theta=0$, the estimates \eqref{theorem2.1}, \eqref{theorem2.2} correspond with \eqref{realtime}.
\end{remark}
Next we consider the heat kernel of $e^{-z((-\Delta)^{\frac{\alpha}{2}} + V  )}$ with $V$ belonging to the higher order Kato class $K_{\alpha}(\mathbb{R}^n)$.
Recall that, for each $\alpha>0$, a real valued measurable function $V(x)$ on $\mathbb{R}^n$ is said to lie in $K_\alpha(\mathbb{R}^n)$ if
$$ \lim_{\delta\rightarrow 0} \sup_{x\in \mathbb{R}^n} \int_{|x-y|<\delta} w_\alpha (x-y) |V(y)| dy=0, \hspace{1cm} \text{for}~~~~ 0<\alpha\le n, $$
and
$$ \sup_{x\in \mathbb{R}^n} \int_{|x-y|<1} |V(y)| dy <\infty, \hspace{1cm} \text{for} ~~~~\alpha>n, $$
where
$$ w_\alpha(x)= \begin{cases} |x|^{\alpha-n}, & \text{if}\quad 0<\alpha<n, \\  \ln |x|^{-n}, & \text{if}\quad \alpha=n.  \end{cases}  $$
Set $I(t, x)= t^{-\frac{n}{\alpha}} \wedge \frac{t}{|x|^{n+\alpha}}$ and denote the integral kernel of  $e^{-z((-\Delta)^{\frac{\alpha}{2}} + V  )}$ by $K(z, x, y)$. Then our results concerning $K(z, x, y)$ are as follows.
\begin{theorem}
Let $\alpha>0$ and $V\in K_\alpha(\mathbb{R}^n)$.

(1) When $0<\alpha<1$, then for any $0<\varepsilon \ll 1,$ there exists constant $C>0$ and $\mu_{\varepsilon, V}$ depending on $V, \varepsilon,$ such that
\begin{equation}\label{theorem3.1}
  |K(z,x,y)|\le C e^{\mu_{\varepsilon, V}|z|} (\cos \theta)^{-\frac{n}{\alpha}+\frac{n}{2}} I(|z|, x-y), \hspace{1cm} \forall z\in \mathbb{C}^+, x, y \in \mathbb{R}^n.
\end{equation}

(2) When $\alpha>1$, then for any $0<\varepsilon \ll 1,$ there exists constant $C'>0$ and $\mu'_{\varepsilon, V}$ depending on $V, \varepsilon,$ such that
\begin{equation}\label{theorem3.2}
  |K(z,x,y)|\le C' e^{\mu'_{\varepsilon, V}|z|} (\cos \theta)^{-\frac{n}{2}-\alpha+1} I( |z|, x-y), \hspace{1cm} \forall z\in \mathbb{C}^+, x, y \in \mathbb{R}^n.
\end{equation}
\end{theorem}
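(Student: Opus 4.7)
The plan is to combine Theorem 1.1 with the Duhamel (Dyson) expansion for $e^{-z((-\Delta)^{\alpha/2}+V)}$, then close the bound using the Kato-class structure of $V$.

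First, I would recast Theorem 1.1 in the unified form
\[
|P(z, x)| \le C\,(\cos\theta)^{-\gamma_\alpha}\, I(|z|, x), \qquad \gamma_\alpha = \begin{cases} n/\alpha - n/2 & \text{if } 0 < \alpha < 1, \\ n/2 + \alpha - 1 & \text{if } \alpha > 1, \end{cases}
\]
by splitting into the two regimes $|x|^\alpha \le |z|$ (where the term $|z|^{-n/\alpha}$ from Theorem 1.1 already coincides with $I(|z|, x)$) and $|x|^\alpha > |z|$. In the latter, set $u = |x|^{\alpha/(\alpha-1)}/|z|^{1/(\alpha-1)}$ and $v = u\cos\theta$; a short algebraic check shows that the ratio of the exponential term to $I(|z|, x)$ simplifies to $v^{\gamma_\alpha}(\cos\theta)^{-\gamma_\alpha}e^{-Cv}$, and the factor $v^{\gamma_\alpha}e^{-Cv}$ is uniformly bounded.

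Next, parameterize the complex segment from $0$ to $z$ by $s = re^{i\theta}$ with $r\in[0,|z|]$ and write the Dyson expansion
\[
K(z,x,y) = \sum_{k=0}^\infty (-1)^k e^{ik\theta}\!\! \int_{\Delta_k(|z|)}\!\!\int_{\mathbb{R}^{nk}} P((|z|-r_1)e^{i\theta}, x-x_1)\,V(x_1)\cdots V(x_k)\, P(r_k e^{i\theta}, x_k-y)\,dx\,dr,
\]
where $\Delta_k(|z|) = \{0 \le r_k \le \cdots \le r_1 \le |z|\}$. Insert the bound from the first step on each kernel factor, and use the convolution identity $\int I(t-s,x-w)I(s,w-y)\,dw \le C\,I(t,x-y)$ for the stable-process kernel together with the Kato-class decomposition (for any $\varepsilon > 0$, the local Kato norm $\sup_x \int_{|x-y|<\delta} I(\rho, x-y)|V(y)|\,dy$ is bounded by $\varepsilon$ for $\rho \le \delta_\varepsilon$, and by a constant $C_{\varepsilon,V}$ uniformly in $\rho$). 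Iterating and summing in $k$ via the Khasminskii-type estimate should give the target bound $Ce^{\mu_{\varepsilon,V}|z|}(\cos\theta)^{-\gamma_\alpha} I(|z|,x-y)$, and specializing $\gamma_\alpha$ to the two cases yields \eqref{theorem3.1} and \eqref{theorem3.2}.

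\emph{Main obstacle.} The delicate point is to control the multiplicative accumulation of $(\cos\theta)^{-\gamma_\alpha}$ factors. A direct application of the Step 1 bound to each of the $k+1$ kernel factors in $K_k$ yields $(\cos\theta)^{-(k+1)\gamma_\alpha}$; summing naively would produce $\exp\bigl(\mu(\cos\theta)^{-\gamma_\alpha}|z|\bigr)$ rather than the clean product form $(\cos\theta)^{-\gamma_\alpha}e^{\mu|z|}$ required by the theorem. This is where the parameter $\varepsilon$ enters: either one restricts to the sub-cone on which $(\cos\theta)^{-\gamma_\alpha}$ is uniformly controlled so that the excess factors are absorbed into $\mu_{\varepsilon,V}$, or one uses the $\varepsilon$-smallness of the local Kato norm on short time intervals to dominate the excess $(\cos\theta)^{-\gamma_\alpha}$ factors and leaves only a single one outside the exponential. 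Verifying that this splitting is compatible with the convolution-telescoping argument, so that $C$ itself is genuinely $\varepsilon$-independent while the growth constant $\mu_{\varepsilon,V}$ absorbs the dependence, is the technical heart of the proof.
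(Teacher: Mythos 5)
Your plan is essentially the paper's proof: the paper first derives exactly your unified bound $|P(z,x)|\le D\,(\cos\theta)^{-\gamma_\alpha} I(|z|,x)$ from Theorem 1.1 (via $t^{\gamma}e^{-pt}\le p^{-\gamma}$), then runs the Duhamel iteration $\tilde K_j$ along the ray $re^{i\theta}$ for $\tilde H=e^{i\theta}(-\Delta)^{\alpha/2}+e^{i\theta}V$, closes it with the convolution inequality for $I$ and the Kato characterization $K_V(t)\to 0$, and finally extends to all $|z|$ by the semigroup doubling trick. The ``main obstacle'' you flag is handled there by the first of your two options: the excess $(\cos\theta)^{-\gamma_\alpha}$ factors are folded into a modified Kato norm $\tilde K_V=(\cos\theta)^{-\gamma_\alpha}K_V$, so that the smallness time $V^{\varepsilon}$ (and hence the growth rate $\mu_{\varepsilon,V}$) absorbs the angular dependence while a single $(\cos\theta)^{-\gamma_\alpha}$ survives in front of $I(|z|,x-y)$.
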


The paper is organized as follows: In section 2, we will show the asymptotic behavior of $P(z, x)$ as $|x|\rightarrow 0$ and $|x|\rightarrow \infty$ uniformly for $z$ satisfying $0<\omega \le |\arg z|<\frac{\pi}{2}$. The proof relies heavily on  properties of Bessel functions and we mainly apply the integration by parts as well as  stationary phase methods.  The calculation however is  complicate. Section 3 is devoted  to Theorem 1.1, Theorem 1.3. We will apply the heat kernel estimates in Theorem 1.1 and some global characterizations of $K_\alpha(\mathbb{R}^n)$ to show Theorem 1.3. In the appendix ,we gather some basic properties of Bessel functions.

Note that the constants $\delta, c, C, C_k, C'_k $ for $k \in \mathbb{N}$ may change from line to line.

\section{Uniform Asymptotic Behavior of $P(z, x)$}
In this section, we denote by $e^{i\theta}=z$ for simplicity.

Recall that, when $z=i\Im z$ is pure imaginary number, the estimates for $P(i\Im z, x)$ are well known. As we shall see, the behaviors of $P(i\Im z, x)$ are quite different from that of the real time heat kernel.  To state the results, we make some reduction. By scaling property, we obtain
$$ P(z, x)= |z|^{-\frac{n}{\alpha}} P(e^{i \theta}, \frac{x}{|z|^{\frac{1}{\alpha}}}), \hspace{1.2cm} \forall |z|\neq 0, x\in \mathbb{R}^n,   $$
where $\theta=\arg z$. Moreover, since $P(e^{-i\theta}, -y)=\overline{P(e^{i\theta}, y)}$,  it is sufficient to consider $P(e^{i\theta}, y)$ for $0\le \theta < \frac{\pi}{2}$, $y\in \mathbb{R}^n.$

First of all,  we have
\begin{align*}
 P(i, y)  &= c_n \int_{\mathbb{R}^n} e^{iy\cdot \xi} e^{-i |\xi|^\alpha}  d\xi \\
   & = c_n \int_{\mathbb{R}^n} e^{iy\cdot \xi} e^{-i |\xi|^\alpha} \varphi(|\xi|)  d\xi +c_n \int_{\mathbb{R}^n} e^{iy\cdot \xi} e^{-i |\xi|^\alpha} (1-\varphi(|\xi|))  d\xi \\
   & \triangleq P_1(i, y) + P_2(i, y),
\end{align*}
where $\varphi(t)$ is a smooth cutoff function which equals 1 for  $0\le t\le \frac{1}{2}$ and 0 for $t\ge1$.

For $P_1(i, y)$, there exists constant $C>0$ such that
$$ |P_1(i, y)| \le C (1+|y|^2)^{-\frac{\alpha+n}{2}} \hspace{1.2cm} \forall \alpha>0, ~~  y\in \mathbb{R}^n. $$
Note that the above estimates are essentially known in various literature. For completeness, we still give a proof below. See for example the proof of \eqref{theorem1.1}.

For $P_2(i, y)$, the asymptotic behaviors vary according to different $\alpha$ and $y$. By Miyachi(\cite[Proposition 5.1]{M} ), Wainger(\cite[p.41-52]{W}), \cite{HHZ}, the properties can be summarized as follows:

When $0<\alpha<1$, $P_2(i, y)$ is smooth in $\mathbb{R}^n\setminus \{0\}$ and for $N>0$
$$ P_2(i, y)= O(|y|^{-N})  \hspace{1cm} \text{as} ~ |y|\rightarrow + \infty. $$
Moreover, we have
$$ P_2(i, y)= C_1|y|^{-n\frac{1-\frac{\alpha}{2}}{1-\alpha}} \exp ( C_2 i |y|^{\frac{\alpha}{\alpha-1}}) + o\left( |y|^{-n\frac{1-\frac{\alpha}{2}}{1-\alpha}}  \right) + E(y), ~~~~ \text{as} ~ y\rightarrow 0 ,$$
where $C_2=\alpha^{\frac{\alpha}{1-\alpha}}(\alpha-1)$, $C_1$ is a constant determined by $\alpha, n$ and $E(y)$ is a smooth function.

When $\alpha>1$, $P_2(i, y)$ is smooth in $\mathbb{R}^n$ and we have
$$ P_2(i, y)= C'_1|y|^{-n\frac{1-\frac{\alpha}{2}}{1-\alpha}} \exp ( C_2 i |y|^{\frac{\alpha}{\alpha-1}}) + o\left( |y|^{-n\frac{1-\frac{\alpha}{2}}{1-\alpha}}  \right) , ~~~~ \text{as} ~ |y|\rightarrow +\infty ,$$
where $C_2=\alpha^{\frac{\alpha}{1-\alpha}}(\alpha-1)$ and $C'_1$ is a constant determined by $\alpha, n$. Then the asymptotic behaviors of $P(i, y)$ will totally be determined by $P_1(i, y),  P_2(i, y)$.

As we have seen, the asymptotic  behaviors of $P(e^{i\theta}, y)$ are quite different between $\theta=0$ and $\theta=\frac{\pi}{2}$ as $|y|\rightarrow 0$ and $|y|\rightarrow \infty$.  Moreover, in the case that $\frac{\alpha}{2}$ is integer, the upper bounds in \eqref{subGauss2} tends to infinity as $\theta \rightarrow \frac{\pi}{2}$. They fail to give the upper bounds for $P(i, y)$. Thus it is natural to ask the question that how $P(e^{i\theta}, y)$ changes as $\theta\rightarrow \frac{\pi}{2}$.

Now we give an example which is heuristic for our problems. Consider
$$ I(z)= \int_0^1  e^{-zt} t^{m-1} dt,  $$
where $z\in \mathbb{C}^+$ and $m\ge 2$ is an integer. Integration by parts gives
$$ I(z)= (m-1)!\left(z^{-m}-e^{-z}\sum_{k=0}^{m-1} \frac{z^{k-m}}{k!}\right) = (m-1)!(z^{-m}- e^{-z}z^{-1} ) + E(z). $$
It follows that $I(z)$ has different behaviors between $z=is$ and $z=s$ as $s\rightarrow +\infty$. However, the main contribution to $I(z, m)$ as $|z|\rightarrow + \infty$ is $z^{-m}- e^{-z}z^{-1} $ uniformly for $\Re z\ge0$.

As we have shown in the example, to determine the asymptotic behavior of $P(e^{i\theta}, y)$ uniformly for $0<  \omega \le |\theta|<\frac{\pi}{2}$, we need to find a balance between the two cases: $\theta=0, \theta=\frac{\pi}{2}$. Set
\begin{align*}
 P(e^{i\theta}, y)  &= c_n \int_{\mathbb{R}^n} e^{iy\cdot \xi} e^{-e^{i\theta} |\xi|^\alpha}  d\xi \\
   & = c_n \int_{\mathbb{R}^n} e^{iy\cdot \xi} e^{-e^{i\theta} |\xi|^\alpha} \varphi(|\xi|)  d\xi +c_n \int_{\mathbb{R}^n} e^{iy\cdot \xi} e^{-e^{i\theta} |\xi|^\alpha} (1-\varphi(|\xi|))  d\xi \\
   & \triangleq P_1(e^{i\theta}, y) + P_2(e^{i\theta}, y),
\end{align*}
where $\varphi(t)$ is a smooth cutoff function which equals 1 for  $0\le t\le \frac{1}{2}$ and 0 for $t\ge1$. Then our first result is as follows.
\begin{proposition}
Let $\alpha>0$,  and $ P_1(e^{i\theta}, y), P_2(e^{i\theta}, y)$ be defined as above. Then the following hold:

(1) There exists positive constant $C>0$ such that
\begin{equation}\label{theorem1.1}
  |P_1(e^{i\theta}, y)| \le C (1+|y|^2)^{-\frac{\alpha+n}{2}}, \hspace{1.2cm} \forall \alpha>0, |\theta|\le \frac{\pi}{2},  y\in \mathbb{R}^n.
\end{equation}
(2) When $0<\alpha<1$, for $0< \omega \le |\theta|<\frac{\pi}{2}$ there exists constant $C_{\theta, 1}$ such that
\begin{equation}\label{theorem1.2}
  P_2(e^{i\theta}, y)= C_{\theta, 1}|y|^{-n\frac{1-\frac{\alpha}{2}}{1-\alpha}} \exp ( - s_0^\alpha e^{i\theta} |y|^{\frac{\alpha}{\alpha-1}}+  s_0 i |y|^{\frac{\alpha}{\alpha-1}}) + E_1(y) + E_2(y) ,
\end{equation}
where $0<\omega<\frac{\pi}{2}$ is a fixed number and $s_0=(\alpha \sin \theta)^{\frac{1}{1-\alpha}}$. Furthermore the following holds uniformly for $0< \omega \le |\theta|<\frac{\pi}{2}$,
$$  |C_{\theta, 1}| \le C_1;$$
$$ |E_1(y)|\le C_2, \hspace{1cm} \forall |y|\le1 ; $$
$$ |E_2(y)|\le \begin{cases}C_3 |y|^{-n\frac{1-\frac{\alpha}{2}}{1-\alpha}+\frac{\alpha}{2(1-\alpha)}} e^{-C_4\cos\theta |y|^{-\alpha}} & n\ge 2, \\   C_5 |\ln |y|| |y|^{-n\frac{1-\frac{\alpha}{2}}{1-\alpha}+\frac{\alpha}{2(1-\alpha)}} e^{-C_6\cos\theta |y|^{-\alpha}} & n= 1,    \end{cases} \hspace{1cm} \forall |y|\le 1,$$
where $C_i>0 $ for $1\le i \le 6$ are constants determined only by $\alpha, n, \omega$.

Moreover, for $N>0$ there exists constant $C_N>0$ such that
\begin{equation}\label{theorem1.2-2}
  |P_2(e^{i\theta}, y)| \le C_N |y|^{-N},  \hspace{1cm} \forall  |\theta|\le \frac{\pi}{2}, |y|\ge1.
\end{equation}

(3) When $\alpha>1$, for $0< \omega \le |\theta|<\frac{\pi}{2}$ there exists constant $C'_{\theta, 1}$ such that
\begin{equation}\label{theorem1.3}
  P_2(e^{i\theta}, y)= C'_{\theta, 1}|y|^{-n\frac{1-\frac{\alpha}{2}}{1-\alpha}} \exp ( - s_0^\alpha e^{i\theta} |y|^{\frac{\alpha}{\alpha-1}}+  s_0 i |y|^{\frac{\alpha}{\alpha-1}}) + E'_1(y) + E'_2(y),
\end{equation}
where $0<\omega<\frac{\pi}{2}$ is a fixed number. Furthermore, the following holds uniformly for $0< \omega \le |\theta|<\frac{\pi}{2}$,
$$  |C'_{\theta, 1}| \le C'_1 ;$$
$$ |E'_1(y)|\le C'_2 |y|^{-n-\alpha}, \hspace{1cm}\forall |y|\ge1 ; $$
$$ |E'_2(y)|\le C'_3 |y|^{-n\frac{1-\frac{\alpha}{2}}{1-\alpha} + \frac{\alpha}{2(1-\alpha)}} \exp ( - C'_4 \cos \theta |y|^{\frac{\alpha}{\alpha-1}}), \hspace{0.5cm} \forall|y|\ge1, $$
where $C'_i>0 $ for $1\le i \le 4$ are constants determined only by $\alpha, n, \omega$.

Moreover, there exists constant $C>0$ such that the following holds uniformly for $|\theta|\le \frac{\pi}{2}$
\begin{equation}\label{theorem1.3-2}
  |P_2(e^{i\theta}, y)| \le C,  \hspace{1cm} \forall |\theta|\le \frac{\pi}{2},  |y|\le 1.
\end{equation}
\end{proposition}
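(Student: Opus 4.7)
The plan is to attack the three parts of the proposition via different but related one-dimensional reductions: Part (1) by direct integration by parts exploiting only the $\alpha$-Hölder singularity of the symbol at the origin, and Parts (2)–(3) by reducing the Fourier integral to a one-dimensional radial integral and then applying the complex stationary phase (steepest descent) method. For $P_1(e^{i\theta},y)$ the integrand is compactly supported in $\xi$ and bounded uniformly in $\theta\in[-\pi/2,\pi/2]$, which yields boundedness for $|y|\le 1$. For $|y|\ge 1$ I would write $e^{-e^{i\theta}|\xi|^\alpha}=1-e^{i\theta}|\xi|^\alpha+R(\xi,\theta)$; the Fourier transform of $|\xi|^\alpha\varphi(|\xi|)$ is the leading nonsmooth contribution and decays like $|y|^{-n-\alpha}$, while the remainder $R$ is smoother and can be integrated by parts enough times to yield at least the same decay. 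Combining the two regimes gives the claimed $(1+|y|^2)^{-(n+\alpha)/2}$.

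For $P_2$ I would pass to polar coordinates and use the Bessel formula
\[
P_2(e^{i\theta},y)=c_n'|y|^{1-n/2}\int_0^\infty (1-\varphi(r))\,r^{n/2} e^{-e^{i\theta}r^\alpha} J_{n/2-1}(|y|r)\,dr.
\]
Decomposing $J_\nu=\tfrac12(H_\nu^{(1)}+H_\nu^{(2)})$ and inserting the uniform Hankel asymptotics from the appendix, $H_\nu^{(1)}(t)\sim\sqrt{2/(\pi t)}\,e^{i(t-\nu\pi/2-\pi/4)}$, reduces the problem to two one-dimensional oscillatory integrals with phases $\Phi_\pm(r)=\pm i|y|r-e^{i\theta}r^\alpha$. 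The rescaling $r=|y|^{1/(\alpha-1)}s$ turns $\Phi_+$ into $|y|^{\alpha/(\alpha-1)}\bigl(is-e^{i\theta}s^\alpha\bigr)$, whose imaginary part has a real stationary point at $s_0=(\alpha\sin\theta)^{1/(1-\alpha)}$. When $0<\alpha<1$ and $|y|\le 1$ (or when $\alpha>1$ and $|y|\ge 1$), the factor $|y|^{\alpha/(\alpha-1)}$ is a large parameter, so applying complex stationary phase at $s_0$ along the real axis produces the main term
\[
C_{\theta,1}\,|y|^{-n(1-\alpha/2)/(1-\alpha)}\exp\bigl(-s_0^\alpha e^{i\theta}|y|^{\alpha/(\alpha-1)}+is_0|y|^{\alpha/(\alpha-1)}\bigr),
\]
with the $|y|$-prefactor arising from combining $|y|^{1-n/2}$, the Bessel normalization $(|y|r)^{-1/2}$ and the Gaussian width $(\Phi_+''(s_0))^{-1/2}$. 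The $H^{(2)}$-branch contributes no stationary point in the support of $1-\varphi$ and, together with the stationary phase error from the $H^{(1)}$-branch, yields the error $E_2$ with the stated Gaussian-type $e^{-C\cos\theta\,|y|^{\alpha/(\alpha-1)}}$ decay; the term $E_1$ collects the bounded subleading contributions from the Bessel expansion near $|y|r\simeq 1$ (including the logarithm in $n=1$ from the more singular behaviour of $J_{-1/2}$).

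The complementary bounds $|P_2(e^{i\theta},y)|\lesssim|y|^{-N}$ for $0<\alpha<1$, $|y|\ge 1$ and $|P_2(e^{i\theta},y)|\lesssim 1$ for $\alpha>1$, $|y|\le 1$, as well as the extra $|y|^{-n-\alpha}$ component of $E_1'$, are handled by direct integration by parts in $\xi$: on $\{|\xi|\ge 1/2\}$ the symbol $(1-\varphi(|\xi|))e^{-e^{i\theta}|\xi|^\alpha}$ and all its derivatives are bounded uniformly in $|\theta|\le\pi/2$ (since $\cos\theta\ge 0$ only helps), so each application of $e^{iy\cdot\xi}=(i|y|^2)^{-1}y\cdot\nabla_\xi e^{iy\cdot\xi}$ gains a factor $|y|^{-1}$, and $N$ applications give the rapid decay; truncating the expansion at order one produces the $|y|^{-n-\alpha}$ in $E_1'$.

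The principal obstacle is keeping everything uniform as $\theta\uparrow\pi/2$, where $\cos\theta\to 0$ flattens the saddle and shrinks the effective width of the steepest descent contour for $\Phi_+$. To cope with this I would deform only along the real axis through $s_0$ rather than along the true steepest descent curve; the residual Gaussian $e^{-c\cos\theta(s-s_0)^2|y|^{\alpha/(\alpha-1)}}$ still localizes the integral as long as $|\theta|\ge\omega>0$, and carefully tracking the $(\cos\theta)^{-1/2}$ from the Hessian together with the Bessel normalization gives a uniform bound $|C_{\theta,1}|\le C_1$. Controlling the remainder in the Hankel asymptotic uniformly in $|y|r\ge 1/2$ (including the transitional regime $|y|r\simeq 1$) forces one to use the integral representations in the appendix rather than purely asymptotic statements; these uniform control issues are what ultimately generate the $(\cos\theta)^{-n/\alpha+n/2}$ and $(\cos\theta)^{-n/2-\alpha+1}$ factors eventually used in Theorem 1.3.
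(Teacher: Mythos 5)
Your overall strategy for parts (2) and (3) is the same as the paper's: pass to polar coordinates, replace $J_{n/2-1}$ by its Hankel asymptotics, rescale $r=|y|^{1/(\alpha-1)}s$ so that $A=|y|^{\alpha/(\alpha-1)}$ becomes the large parameter, and run stationary phase on the real axis at $s_0=(\alpha\sin\theta)^{1/(1-\alpha)}$, with the non-stationary $H^{(2)}$-branch and the off-saddle regions handled by integration by parts against $h(s)=(-e^{i\theta}\alpha s^{\alpha-1}+i)^{-1}$. Your part (1) is a harmless variant (Taylor-expanding the exponential and quoting the decay of $\widehat{|\xi|^\alpha\varphi}$ instead of the paper's direct derivative bounds $|\partial^\beta(e^{-z|\xi|^\alpha}\varphi)|\lesssim|\xi|^{\alpha-|\beta|}$ with a splitting at $|\xi|\sim|y|^{-1}$), and the rapid decay \eqref{theorem1.2-2} for $0<\alpha<1$, $|y|\ge1$ by repeated integration by parts is exactly the paper's (Wainger's) argument, since for $\alpha<1$ each derivative of the symbol gains $|\xi|^{\alpha-1}$ and enough applications make the integrand absolutely integrable uniformly in $\theta$. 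Two small inaccuracies: the logarithm in $n=1$ belongs to $E_2$, not $E_1$ (it arises from $\int_{A^{-1}}^{\delta s_0}s^{(n-3)/2}\,ds$ in the region just above $sA=1$), and the paper treats the transitional regime $sA\le1$ not by uniform Hankel remainders but by substituting the convergent power series \eqref{Bessel def} for $J_{n/2-1}$ and integrating term by term; your proposed reliance on the asymptotic expansion down to $|y|r\ge1/2$ would need the integral-representation control you allude to, since the expansion is not uniformly valid there.

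The genuine gap is your treatment of \eqref{theorem1.3-2}, the uniform bound $|P_2(e^{i\theta},y)|\le C$ for $\alpha>1$, $|y|\le1$, $|\theta|\le\tfrac{\pi}{2}$. You propose to get it by "direct integration by parts in $\xi$", i.e. by applying $e^{iy\cdot\xi}=(i|y|^2)^{-1}y\cdot\nabla_\xi e^{iy\cdot\xi}$. This cannot work here: each such application costs a factor $|y|^{-1}\ge1$ rather than gaining one, and for $\alpha>1$ each derivative of $e^{-e^{i\theta}|\xi|^\alpha}$ produces a growing factor $|\xi|^{\alpha-1}$, so the integration by parts makes the integrand less integrable; moreover at $\theta=\pm\tfrac{\pi}{2}$ the damping $e^{-\cos\theta|\xi|^\alpha}$ disappears and the original integral over $\{|\xi|\ge\tfrac12\}$ is not even absolutely convergent, so some genuine use of the oscillation of $e^{-i\sin\theta|\xi|^\alpha}$ is unavoidable. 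The paper's proof is entirely different: it computes $P(z,y)$ exactly as the term-by-term Laplace transform of the Bessel series, obtaining $P(z,y)=Cz^{-n/\alpha}\sum_{k\ge0}\frac{(-1)^k\Gamma(\frac{n+2k}{\alpha})}{4^kk!\,\Gamma(k+\frac n2)}z^{-2k/\alpha}|y|^{2k}$, and observes that for $\alpha>1$ this power series in $|y|$ has infinite radius of convergence (by Stirling, since $\Gamma(\frac{n+2k}{\alpha})/(k!)^2$ decays superexponentially exactly when $\alpha>1$), whence $P(e^{i\theta},y)$ is uniformly bounded for $|y|\le1$; subtracting the bounded $P_1$ gives \eqref{theorem1.3-2}. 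You need to supply an argument of this kind (or a uniform stationary-phase/van der Corput bound exploiting $\alpha>1$) to close this part.
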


\begin{remark}
(1) Note that the term
$$ C|y|^{-n\frac{1-\frac{\alpha}{2}}{1-\alpha}} \exp ( - s_0^\alpha z |y|^{\frac{\alpha}{\alpha-1}}+  s_0 i |y|^{\frac{\alpha}{\alpha-1}}),$$
dominates the asymptotic behaviors of $P_2(e^{i\theta}, y)$ for $0<\alpha<1, |y|\rightarrow 0$ and $\alpha>1, |y|\rightarrow \infty$ respectively. Moreover, letting  $\theta \rightarrow \frac{\pi}{2}$ gives
 $$  |y|^{-n\frac{1-\frac{\alpha}{2}}{1-\alpha}} \exp ( - s_0^\alpha z |y|^{\frac{\alpha}{\alpha-1}}+  s_0 i |y|^{\frac{\alpha}{\alpha-1}} ) \rightarrow  |y|^{-n\frac{1-\frac{\alpha}{2}}{1-\alpha}} \exp( i \alpha^{\frac{\alpha}{1-\alpha}}(\alpha-1) |y|^{-\frac{\alpha}{\alpha-1}} ).  $$
Thus we have regained the asymptotic behaviors of $P(i, y)$(\cite[Proposition 5.1]{M}).

(2) For $ 0\le \theta \le \omega<\frac{\pi}{2} $, the following estimate holds:
\begin{equation}\label{remark1.2}
  |P(e^{i\theta}, y)|\le C 1 \wedge |y|^{-n-\alpha}, \hspace{0.8cm } \forall  \alpha>0, y\in \mathbb{R}^n.
\end{equation}
With little modification of the proof for  \eqref{theorem1.1} will show \eqref{remark1.2}.  Indeed, the estimates \eqref{theorem1.1} are well known and can be found in various literature(for example \cite{BG, HNS, K, MYZ, S}).
\end{remark}

\subsection{Proof  of Proposition 2.1 (1)}
\begin{proof}[Proof of \eqref{theorem1.1}]
Set
$$ L(y, D)= \frac{y\cdot \nabla_{\xi}}{i |y|^2} \hspace{1cm} \text{and} \hspace{1cm} L^*(y, D)=- \frac{y\cdot \nabla_{\xi}}{i |y|^2}. $$
It is direct to check $L(y, D) e^{iy\cdot \xi}= e^{i y \cdot \xi}$  and $L^*$ is the conjugate operator to $L$. Thus integration by parts gives
\begin{align*}
 & P_1(z, y)  \\
 = &  c_n \int_{\mathbb{R}^n} e^{iy\cdot \xi} L^*(e^{-z|\xi|^\alpha}  \varphi(|\xi|)) d\xi \\
 =  &  c_n \int_{\mathbb{R}^n} e^{iy\cdot \xi} \varphi\left(\frac{|\xi|}{\delta}\right)  L^*(e^{-z|\xi|^\alpha}  \varphi(|\xi|) ) d\xi\\
  &+  c_n \int_{\mathbb{R}^n} e^{iy\cdot \xi} \left(1- \varphi\left(\frac{|\xi|}{\delta}\right) \right) L^*(e^{-z|\xi|^\alpha}  \varphi(|\xi|) ) d\xi \\
  \triangleq &  I + II,
\end{align*}
where $\varphi$ is smooth cutoff and $\delta>0$ will be determined later.

For $I$, we obtain
\begin{align*}
  |I| & \le C \int_{|\xi|\le \delta} |L^* ( e^{-z|\xi|^\alpha}  \varphi(|\xi|))| d\xi  \\
      & \le \frac{C}{|y|} \int_{|\xi|\le \delta}  | \xi |^{\alpha-1}|\varphi(|\xi|)| +  |\varphi'(|\xi|)| d\xi\\
      & \le \frac{C}{|y|} \int_{|\xi|\le \delta}  | \xi |^{\alpha-1} d\xi = C' |y|^{-1} \delta^{\alpha+n-1}.
\end{align*}
In the last inequality, we have used the facts that $\varphi'(|\xi|)$ is supported in $\frac{1}{2}\le |\xi|\le 1$ and hence $|\varphi'(|\xi|)|\le C|\xi|^{\alpha-1}$ for some constant $C>0$.

Now we turn to the estimates of $II$. Integration by parts for  $N$ times with some $N>[\alpha]+n+1$ gives
\begin{align*}
  |II| & \le C \int_{\mathbb{R}^n} \left| (L^*)^{N-1} \left[\left(1-\varphi\left(\frac{|\xi|}{\delta}\right)\right)L^*(e^{-z|\xi|^\alpha} \varphi(|\xi|) ) \right] \right|   d\xi  \\
   & \le C \int_{|\xi|\ge \frac{\delta}{2}} | (L^*)^{N} (e^{-z|\xi|^\alpha} \varphi(|\xi|) )  |   d\xi \\
    &+ \sum_{k=1}^{N-1} C_k \int_{\mathbb{R}^n} \left|   (L^*)^k \left(1-\varphi\left(\frac{|\xi|}{\delta}\right)\right) (L^*)^{N-k}(e^{-z|\xi|^\alpha} \varphi(|\xi|) )  \right| d\xi.
\end{align*}
Since $\varphi(|\xi|)$ is supported in $|\xi|\le 1$ and $\varphi'(|\xi|)$ is supported in $\frac{1}{2}\le |\xi|\le 1$, then
$$ |(L^*)^N  (e^{-z|\xi|^\alpha} \varphi(|\xi|) )|  \le C_{N} |\xi|^{\alpha-N} |y|^{-N}. $$
Therefore
\begin{align*}
  |II| & \le \frac{C}{|y|^N} \left( \int_{|\xi|\ge  \frac{\delta}{2}} |\xi|^{\alpha-N} d\xi   + \sum_{k=1}^{N-1} \int_{| \frac{\delta}{2}\le |\xi|\le \delta}    \delta^{-k} |y|^{\alpha-N+k} d\xi \right) \\
       & \le C |y|^{-N} \delta^{\alpha-N+n}.
\end{align*}
Combing the estimates of $I$ and $II$ gives
$$| P_1(z, y)| \le C ( |y|^{-1}\delta^{\alpha+n-1} + |y|^{-N} \delta^{\alpha-N+n} ).  $$
Letting $\delta=|y|^{-1}$ implies
$$ | P_1(z, y)| \le C |y|^{-n-\alpha}.  $$
As a result, \eqref{theorem1.1} follows since $P_1(z, y)$ is bounded for $y\in \mathbb{R}^n$.
\end{proof}
\subsection{Proof of  of Proposition 2.1 (2)}
Note that it is sufficient to consider $0< \omega\le \theta <\frac{\pi}{2}$, since $\overline{P(z, y)}=P(\bar{z}, -y)$.
\begin{proof}[Proof of \eqref{theorem1.2}]
Since $\varphi(|\xi|)$ is supported in $|\xi|\le 1$, we have
\begin{equation}\label{3.2-1}
  P_2(z, y) = c_n \int_{\frac{1}{2}\le |\xi|\le 1} e^{i y\cdot \xi} e^{-z|\xi|^\alpha} (1-\varphi(|\xi|)) d\xi + c_n \int_{|\xi|\ge1} e^{i y\cdot \xi} e^{-z|\xi|^\alpha}  d\xi.
\end{equation}
It is clear that
$$  \left| \int_{\frac{1}{2}\le |\xi|\le 1} e^{i y\cdot \xi} e^{-z|\xi|^\alpha} (1-\varphi(|\xi|)) d\xi  \right| \le C , \hspace{1cm} \forall y\in \mathbb{R}^n, ~~~~ \omega\le \theta <\frac{\pi}{2}.    $$
Thus it is sufficient to consider
\begin{align*}
   & \int_{|\xi|\ge1} e^{i y\cdot \xi} e^{-z|\xi|^\alpha}  d\xi \\
  = &  C \int_1^{+\infty} e^{-zr^\alpha} r^{n-1} (r|y|)^{\frac{2-n}{2}} J_{\frac{n}{2}-1} (r|y|) dr\\
  = & C' |y|^{n\frac{1-\frac{\alpha}{2}}{\alpha-1} + \frac{\alpha}{\alpha-1}}  \int_{|y|^{\frac{1}{1-\alpha}}}^{+\infty} e^{-z|y|^{\frac{\alpha}{\alpha-1}} s^\alpha} s^{\frac{n}{2}}  J_{\frac{n}{2}-1} (s|y|^{\frac{\alpha}{\alpha-1}}) ds\\
  =& C' |y|^{n\frac{1-\frac{\alpha}{2}}{\alpha-1} } A \int_{A^{-\frac{1}{\alpha}}}^{+\infty} e^{-z A s^\alpha} s^{\frac{n}{2}}  J_{\frac{n}{2}-1} (sA) ds,
\end{align*}
where $A=|y|^{\frac{\alpha}{\alpha-1}}$ and we have changed the variable $r= |y|^{\frac{1}{\alpha-1}}s$ in the second equality. Note that $A\rightarrow +\infty$ as $|y|\rightarrow 0$ when $0<\alpha<1$.

Then we have
\begin{equation}\label{3.2-2}
  \int_{|\xi|\ge1} e^{i y\cdot \xi} e^{-z|\xi|^\alpha}  d\xi \triangleq C' |y|^{n\frac{1-\frac{\alpha}{2}}{\alpha-1} } A I,
\end{equation}
where
$$ I= \int_{A^{-\frac{1}{\alpha}}}^{+\infty} e^{-z A s^\alpha} s^{\frac{n}{2}}  J_{\frac{n}{2}-1} (sA) ds.  $$
To prove \eqref{theorem1.2}, we need further to estimate $I$.
\begin{equation}\label{3.2-3}
  I= \int_{A^{-\frac{1}{\alpha}}}^{A^{-1}} + \int_{A^{-1}}^{+\infty} \triangleq I_1 + I_2.
\end{equation}
First we have,
\begin{align*}
 I_1 = & \int_{A^{-\frac{1}{\alpha}}}^{A^{-1}} e^{-z A s^\alpha} s^{\frac{n}{2}}  J_{\frac{n}{2}-1} (sA) ds   \\
     = & A^{-\frac{n}{2}-1} \int_{A^{1-\frac{1}{\alpha}}}^{1} e^{-z A^{1-\alpha} t^\alpha} t^{\frac{n}{2}}  J_{\frac{n}{2}-1} (t) dt \\
     = & C A^{-\frac{n}{2}-1} \int_{A^{\alpha-1}}^{1} e^{-z A^{1-\alpha} \tau} \tau^{\frac{n}{2\alpha} + \frac{1}{\alpha} -1 }  J_{\frac{n}{2}-1} (\tau^{\frac{1}{\alpha}}) d\tau.
\end{align*}
According to \eqref{Bessel def}, we have
\begin{align*}
   & \int_{A^{\alpha-1}}^{1} e^{-z A^{1-\alpha} \tau} \tau^{\frac{n}{2\alpha} + \frac{1}{\alpha} -1 }  J_{\frac{n}{2}-1} (\tau^{\frac{1}{\alpha}}) d\tau \\
   =& \int_{A^{\alpha-1}}^{1} e^{-z A^{1-\alpha} \tau} \sum_{k\ge0} a_k \tau^{\frac{1}{\alpha}(n-\alpha+2k) }   d\tau\\
   =& a_0 \int_{A^{\alpha-1}}^{1} e^{-z A^{1-\alpha} \tau}  \tau^{\frac{1}{\alpha}(n-\alpha) }   d\tau + \sum_{k\ge 1} a_k \int_{A^{\alpha-1}}^{1} e^{-z A^{1-\alpha} \tau}  \tau^{\frac{1}{\alpha}(n-\alpha+2k) }   d\tau,
\end{align*}
where $a_k= \frac{(-1)^k 2^{1-2k-\frac{n}{2}}}{k! \Gamma(k+\frac{n}{2})}$.

Note that
$$ \int_{A^{\alpha-1}}^{1} e^{-z A^{1-\alpha} \tau}  \tau^{\frac{1}{\alpha}(n-\alpha) }   d\tau = \int_0^{+\infty}- \int_0^{A^{\alpha-1}} - \int_1^{+\infty}. $$
It is clear that
$$ \int_0^{+\infty} e^{-z A^{1-\alpha} \tau}  \tau^{\frac{1}{\alpha}(n-\alpha) }   d\tau =  \Gamma(\frac{n}{\alpha}) (zA^{1-\alpha})^{-\frac{n}{\alpha}}=  \Gamma(\frac{n}{\alpha}) z^{-\frac{n}{\alpha}} A^{\frac{n}{\alpha}(\alpha-1)},  $$
and
$$  \left|\int_0^{A^{\alpha-1}} e^{-z A^{1-\alpha} \tau}  \tau^{\frac{1}{\alpha}(n-\alpha) }   d\tau \right| \le C A^{\alpha-1} A^{(\alpha-1)\frac{1}{\alpha}(n-\alpha)}=C A^{\frac{n}{\alpha}(\alpha-1)}. $$
Moreover, integration by parts gives
\begin{align*}
   & \int_1^{+\infty} e^{-z A^{1-\alpha} \tau}  \tau^{\frac{1}{\alpha}(n-\alpha) }   d\tau  \\
  = & \sum_{k=1}^{[\frac{n}{\alpha}]+1} c_k \frac{e^{-zA^{1-\alpha}}}{(zA^{1-\alpha})^k}   + \frac{c_{[\frac{n}{\alpha}]+1}}{(zA^{1-\alpha})^{[\frac{n}{\alpha}]+1}}  \int_1^{+\infty} e^{-z A^{1-\alpha} \tau}  \tau^{\frac{n}{\alpha}-[\frac{n}{\alpha}]-2}   d\tau.
\end{align*}
Combing these estimates gives
\begin{equation}\label{3.2-4}
  \int_{A^{\alpha-1}}^{1} e^{-z A^{1-\alpha} \tau}  \tau^{\frac{1}{\alpha}(n-\alpha) }   d\tau = \Gamma(\frac{n}{\alpha}) z^{-\frac{n}{\alpha}} A^{\frac{n}{\alpha}(\alpha-1)}+ \sum_{k=1}^{[\frac{n}{\alpha}]+1} c_k \frac{e^{-zA^{1-\alpha}}}{(zA^{1-\alpha})^k}  + H_1(A),
\end{equation}
where $H_1(A)$ satisfies
$$ |H_1(A)| \le C A^{\frac{n}{\alpha}(\alpha-1)}, \hspace{1cm} \forall A\ge1, $$
for some positive constant $C>0$.

On the other hand, integration by parts gives
\begin{align*}
   & \int_{A^{\alpha-1}}^{1} e^{-z A^{1-\alpha} \tau}  \tau^{\frac{1}{\alpha}(n-\alpha+2k) }   d\tau   \\
  = & \frac{e^{-z}}{z} A^{(\alpha-1)\frac{1}{\alpha}(n+2k)} - \frac{e^{-zA^{1-\alpha}}}{zA^{1-\alpha}}+ \frac{n-\alpha+2k}{\alpha zA^{1-\alpha}} \int_{A^{\alpha-1}}^{1} e^{-z A^{1-\alpha} \tau}  \tau^{\frac{1}{\alpha}(n-\alpha+2k)-1 }   d\tau.
\end{align*}
After $[\frac{n}{\alpha}]+1$ steps of integrating by parts, we obtain
\begin{align*}
   & \int_{A^{\alpha-1}}^{1} e^{-z A^{1-\alpha} \tau}  \tau^{\frac{1}{\alpha}(n-\alpha+2k) }   d\tau \\
  = & A^{(\alpha-1)\frac{1}{\alpha}(n+2k)}e^{-z} \sum_{l=1}^{[\frac{n}{\alpha}]+1} c_l z^{-l} + e^{-zA^{1-\alpha}} \sum_{l=1}^{[\frac{n}{\alpha}]+1} c'_l (zA^{1-\alpha})^{-l}\\
  & + \frac{c_{[\frac{n}{\alpha}]+1}}{(zA^{1-\alpha})^{[\frac{n}{\alpha}]+1}}  \int_{A^{\alpha-1}}^1 e^{-z A^{1-\alpha} \tau}  \tau^{\frac{1}{\alpha}(n-\alpha+2k)-[\frac{n}{\alpha}]-1}   d\tau.
\end{align*}
It follows that
\begin{align*}
   & \sum_{k\ge1} a_k \int_{A^{\alpha-1}}^{1} e^{-z A^{1-\alpha} \tau}  \tau^{\frac{1}{\alpha}(n-\alpha+2k) }   d\tau  \\
  = & e^{-z} \sum_{l=1}^{[\frac{n}{\alpha}]+1} c_l z^{-l} \sum_{k\ge1} a_k A^{(\alpha-1)\frac{1}{\alpha}(n+2k)} + e^{-zA^{1-\alpha}} \sum_{l=1}^{[\frac{n}{\alpha}]+1} c'_l (zA^{1-\alpha})^{-l} \sum_{k\ge1} a_k\\
  &+ \frac{c_{[\frac{n}{\alpha}]+1}}{(zA^{1-\alpha})^{[\frac{n}{\alpha}]+1}} \sum_{k\ge1} a_k  \int_{A^{\alpha-1}}^1 e^{-z A^{1-\alpha} \tau}  \tau^{\frac{1}{\alpha}(n-\alpha+2k)-[\frac{n}{\alpha}]-1}   d\tau.
\end{align*}
Furthermore there exists $C>0$ determined by $\alpha, n$ such that
$$ \left|  \int_{A^{\alpha-1}}^1 e^{-z A^{1-\alpha} \tau}  \tau^{\frac{1}{\alpha}(n-\alpha+2k)-[\frac{n}{\alpha}]-1}   d\tau \right|\le C, \hspace{1cm} \forall k\ge1, ~~~~ A\ge 1.$$
Since $\sum\limits_{k\ge1} a_k <\infty$, we conclude
\begin{align}\label{3.2-5}
   & \sum_{k\ge1} a_k \int_{A^{\alpha-1}}^{1} e^{-z A^{1-\alpha} \tau}  \tau^{\frac{1}{\alpha}(n-\alpha+2k) }   d\tau \\
  = &  e^{-z} \sum_{l=1}^{[\frac{n}{\alpha}]+1} c_l z^{-l} A^{(\alpha-1)\frac{n}{\alpha}} \sum_{k\ge1} a_k A^{(\alpha-1)\frac{2k}{\alpha}} + e^{-zA^{1-\alpha}} \sum_{l=1}^{[\frac{n}{\alpha}]+1} C_l (zA^{1-\alpha})^{-l} + H_2(A), \notag
\end{align}
where $H_2(A)$ satisfies
$$ |H_2(A)|  \le C A^{\frac{n}{\alpha}(\alpha-1)}, \hspace{1cm} \forall A\ge1.  $$
Thus  \eqref{3.2-4} and \eqref{3.2-5} imply
\begin{align*}
  I_1 = & Cz^{-\frac{n}{\alpha}} A^{\frac{n}{2}-\frac{n}{\alpha}-1} + B_1(z) A^{\frac{n}{2}-\frac{n}{\alpha}-1} \sum_{k\ge1} a_k A^{(\alpha-1)\frac{2k}{\alpha}} \\
   & + e^{-zA^{1-\alpha}} A^{-\frac{n}{2}-1} \sum_{k=1}^{[\frac{n}{\alpha}]+1} c'_k (zA^{1-\alpha})^{-l} + A^{-\frac{n}{2}-1} (H_1(A) + H_2(A)),
\end{align*}
where
$$B_1(z)=e^{-z} \sum_{l=1}^{[\frac{n}{\alpha}]+1} c_l z^{-l},$$
as in \eqref{3.2-5}.

By the definition, $A=|y|^{\frac{\alpha}{\alpha-1}}$, we have $|y|^{n\frac{1-\frac{\alpha}{2}}{\alpha-1}}=A^{ \frac{n}{\alpha}-\frac{n}{2} }$ and hence
\begin{equation}\label{3.2-6}
  |y|^{n\frac{1-\frac{\alpha}{2}}{\alpha-1}} A I_1 = \tilde{E}_1(y) + \tilde{E}_2(y),
\end{equation}
where
\begin{equation}\label{3.2-7}
  |\tilde{E}_1(y)| \le C_1 , \hspace{0.3cm} |\tilde{E}_2(y)| \le C_2 |y|^{n\frac{1-\frac{\alpha}{2}}{\alpha-1}+ \frac{\alpha}{\alpha-1}(\alpha-1-\frac{n}{2}) } e^{-C_3\cos \theta |y|^{-\alpha}}, \hspace{0.5cm} \forall |y|\le 1
\end{equation}
for some constants $C_1, C_2, C_3>0$ determined only by $n, \alpha, \omega$.

Next we will employ the oscillatory integrals theory to deal with $I_2$.
By \eqref{Bessel pro2}, we have
\begin{align*}
  I_2= & \int_{A^{-1}}^{+\infty} e^{-zAs^\alpha} s^{\frac{n}{2}} J_{\frac{n}{2}-1} (sA) ds  \\
  = & A^{-\frac{1}{2}} \int_{A^{-1}}^{+\infty} e^{-z A s^\alpha +isA} s^{\frac{n-1}{2}}  L_1(sA) ds \\
   &+ A^{-\frac{1}{2}} \int_{A^{-1}}^{+\infty} e^{-z A s^\alpha -isA} s^{\frac{n-1}{2}}  L_2(sA) ds,
\end{align*}
where
$$ L_1(sA)= \sum_{k\ge 0} b_k (sA)^{-k},\hspace{1cm} L_2(sA)= \sum_{k\ge 0} b'_k (sA)^{-k},   $$
as in \eqref{Bessel pro2}.
To proceed, consider
\begin{equation}\label{3.2-8}
 \int_{A^{-1}}^{+\infty} e^{-z A s^\alpha +isA} s^{\frac{n-1}{2}}  L_1(sA) ds =  \int_{A^{-1}}^{\delta s_0} + \int_{\delta s_0}^{\frac{s_0}{\delta}} + \int_{\frac{s_0}{\delta}}^{+\infty} \triangleq J_1 + J_2 +J_3,
\end{equation}
where $s_0=(\alpha \sin \theta)^{\frac{1}{1-\alpha}}$, $\delta>0$ is close enough to 1 and will be determined later.

According to the definition of $L_1(sA)$, it follows
$$ J_1=b_0\int_{A^{-1}}^{\delta s_0} e^{-zAs^\alpha+ isA} s^{\frac{n-1}{2}} ds + A^{-1} \int_{A^{-1}}^{\delta s_0} e^{-zAs^\alpha+ isA} s^{\frac{n-3}{2}} \sum_{k\ge 1} b_k (sA)^{-k+1} ds.  $$
Note that for $A^{-1}\le s \le \delta s_0$, we have $1\le sA$ and hence
$$  \left| A^{-1} \int_{A^{-1}}^{\delta s_0} e^{-zAs^\alpha+ isA} s^{\frac{n-3}{2}} \sum_{k\ge 1} b_k (sA)^{-k+1} ds  \right| \le C A^{-1} e^{-\cos\theta A^{1-\alpha}} \int_{A^{-1}}^{\delta s_0} s^{\frac{n-3}{2}}  ds.  $$
Then we obtain, for $n\ge 2$
\begin{equation}\label{3.2-9}
  \left| A^{-1} \int_{A^{-1}}^{\delta s_0} e^{-zAs^\alpha+ isA} s^{\frac{n-3}{2}} \sum_{k\ge 1} b_k (sA)^{-k+1} ds  \right| \le C A^{-1} e^{-\cos\theta A^{1-\alpha}},
\end{equation}
 and for $n=1$,
\begin{equation}\label{3.2-10}
  \left| A^{-1} \int_{A^{-1}}^{\delta s_0} e^{-zAs^\alpha+ isA} s^{\frac{n-3}{2}} \sum_{k\ge 1} b_k (sA)^{-k+1} ds  \right| \le C' \frac{\ln A}{A} e^{-\cos\theta A^{1-\alpha}}.
\end{equation}
On the other hand,  integration by parts gives for $n\ge 2$
\begin{align*}
   & \int_{A^{-1}}^{\delta s_0} e^{-zAs^\alpha+ isA} s^{\frac{n-1}{2}} ds   \\
  = & A^{-1}\int_{A^{-1}}^{\delta s_0}  s^{\frac{n-1}{2}} h(s) de^{-zAs^\alpha+ isA}\\
  =&  A^{-1}[(\delta s_0)^{\frac{n-1}{2}} h(\delta s_0)  e^{-zA(\delta s_0)^\alpha+ i\delta s_0 A}- A^{-\frac{n-1}{2}} h(A^{-1}) e^{-zA^{1-\alpha}+ i} ] \\
  & - A^{-1} \int_{A^{-1}}^{\delta s_0} e^{-zAs^\alpha+ isA} \left[\frac{n-1}{2}s^{\frac{n-3}{2}}h(s)+ s^{\frac{n-1}{2}} h'(s)\right] ds,
\end{align*}
where $h(s)= (-e^{i\theta}\alpha s^{\alpha-1}+ i)^{-1} $. By Lemma 4.1, we have
\begin{align*}
   & \left| A^{-1} \int_{A^{-1}}^{\delta s_0} e^{-zAs^\alpha+ isA} \left[\frac{n-1}{2}s^{\frac{n-3}{2}}h(s)+ s^{\frac{n-1}{2}} h'(s)\right] ds  \right| \\
 \le  &  CA^{-1} e^{-\cos \theta A^{1-\alpha}} \int_{A^{-1}}^{\delta s_0} s^{\frac{n-3}{2}} |h(s)| + s^{\frac{n-1}{2}} |h'(s)| ds\\
 \le & C' A^{-1} e^{-\cos \theta A^{1-\alpha}}.
\end{align*}
Then we conclude for $n\ge 2$
\begin{equation}\label{3.2-11}
  \left|\int_{A^{-1}}^{\delta s_0} e^{-zAs^\alpha+ isA} s^{\frac{n-1}{2}} ds \right| \le C A^{-1} e^{-\cos\theta A^{1-\alpha}}.
\end{equation}
When $n=1$, similarly we have
\begin{align*}
   & \int_{A^{-1}}^{\delta s_0} e^{-zAs^\alpha+ isA}  ds   \\
  = & A^{-1}\int_{A^{-1}}^{\delta s_0}   h(s) de^{-zAs^\alpha+ isA}\\
  =&  A^{-1}[h(\delta s_0)  e^{-zA(\delta s_0)^\alpha+ i\delta s_0 A}-  h(A^{-1}) e^{-zA^{1-\alpha}+ i} ] \\
  & - A^{-1} \int_{A^{-1}}^{\delta s_0} e^{-zAs^\alpha+ isA} h'(s) ds.
\end{align*}
Since $|h'(s)|\le c s^{-\alpha}$, we conclude for $n=1$
\begin{equation}\label{3.2-12}
  \left|\int_{A^{-1}}^{\delta s_0} e^{-zAs^\alpha+ isA}  ds \right| \le C' A^{-1} e^{-\cos\theta A^{1-\alpha}}.
\end{equation}
By \eqref{3.2-9}, \eqref{3.2-10}, \eqref{3.2-11}, \eqref{3.2-12}, it follows
\begin{equation}\label{3.2-13}
  |J_1|\le \begin{cases}C A^{-1}e^{-\cos\theta A^{1-\alpha}} & n\ge 2; \\   C' \frac{\ln A}{A}e^{-\cos\theta A^{1-\alpha}} & n= 1,    \end{cases}
\end{equation}
for some constants $C, C'>0$ only determined by $n, \alpha, \omega$.

To estimates $J_3$, we separate the integral into two parts
\begin{align*}
  J_3= & \sum_{k=0}^{[\frac{n+1}{2}]+1} b_k A^{-k} \int_{\frac{s_0}{\delta}}^{\infty} e^{-zAs^\alpha +isA} s^{\frac{n-1}{2}-k} ds \\
   & + A^{-[\frac{n+1}{2}]-1}  \int_{\frac{s_0}{\delta}}^{\infty} e^{-zAs^\alpha +isA} s^{\frac{n-1}{2}-[\frac{n+1}{2}]-1} \sum_{k\ge [\frac{n+1}{2}]+1} b_k (sA)^{-k+[\frac{n+1}{2}]+1}   ds.
\end{align*}
It is clear that
\begin{align*}
   & \left| A^{-[\frac{n+1}{2}]-1}  \int_{\frac{s_0}{\delta}}^{\infty} e^{-zAs^\alpha +isA} s^{\frac{n-1}{2}-[\frac{n+1}{2}]-1} \sum_{k\ge [\frac{n+1}{2}]+1} b_k (sA)^{-k+[\frac{n+1}{2}]+1}  \right|  \\
  \le & C   A^{-[\frac{n+1}{2}]-1} e^{-\cos \theta A (\frac{s_0}{\delta})^\alpha } \int_{\frac{s_0}{\delta}}^{\infty} s^{\frac{n-1}{2}-[\frac{n+1}{2}]-1} ds\\
  \le & C'   A^{-[\frac{n+1}{2}]-1} e^{-\cos \theta A (\frac{s_0}{\delta})^\alpha }.
\end{align*}
Integration by parts for $N=[\frac{n+1}{2}]+1$ times gives
\begin{align*}
   & \int_{\frac{s_0}{\delta}}^{+\infty} e^{-zAs^\alpha +isA} s^{\frac{n-1}{2}-k} ds  \\
  = & e^{-zA(\frac{s_0}{\delta})^\alpha + iA\frac{s_0}{\delta}} \sum_{k=1}^{N} c_k A^{-k}\\
  - & A^{-N} \int_{\frac{s_0}{\delta}}^{+\infty} e^{-zAs^\alpha +isA} \sum_{\beta_1, \cdots, \beta_{N+1}} C_{\beta_1, \cdots , \beta_{N+1}} h^{(\beta_1)}(s) \cdots h^{(\beta_N)}(s) s^{\frac{n-1}{2}-k-\beta_{N+1}} ds,
\end{align*}
where $\beta_k \ge 0$ are integers satisfying $ \beta_1+ \cdots+ \beta_{N+1} =N $.
By Lemma 4.1, we obtain
\begin{align*}
   & \left| A^{-N} \int_{\frac{s_0}{\delta}}^{+\infty} e^{-zAs^\alpha +isA} \sum_{\beta_1, \cdots, \beta_{N+1}} C_{\beta_1, \cdots , \beta_{N+1}} h^{(\beta_1)}(s) \cdots h^{(\beta_N)}(s) s^{\frac{n-1}{2}-k-\beta_{N+1}} ds  \right| \\
  \le & C A^{-N} e^{-\cos \theta (\frac{s_0}{\delta})^\alpha} \int_{\frac{s_0}{\delta}}^{+\infty} s^{\frac{n-1}{2}-N-k} ds\\
  \le & C' A^{-N} e^{-\cos \theta (\frac{s_0}{\delta})^\alpha}.
\end{align*}
Therefore,
\begin{equation}\label{3.2-14}
  |J_3|\le C A^{-1} e^{-c\cos \theta A},
\end{equation}
where $C, c>0$ only determined by $n, \alpha, \omega$.

For $J_2$, we will apply the oscillatory integral theories. For this purpose, $J_2$ can be written as
\begin{align*}
  J_2= & b_0\int_{\delta s_0}^{\frac{s_0}{\delta}} e^{-iA(\sin \theta s^\alpha-s)} e^{-\cos \theta A s^\alpha} s^{\frac{n-1}{2}} ds \\
   & + A^{-1} \int_{\delta s_0}^{\frac{s_0}{\delta}}  e^{-zAs^\alpha +isA} s^{\frac{n-3}{2}}  \sum_{k\ge1} b_k (sA)^{-k+1} ds.
\end{align*}
It is clear that
$$   \left| A^{-1} \int_{\delta s_0}^{\frac{s_0}{\delta}}  e^{-zAs^\alpha +isA} s^{\frac{n-3}{2}}  \sum_{k\ge1} b_k (sA)^{-k} ds \right| \le C A^{-1} e^{-(\delta s_0)^\alpha \cos \theta A}.       $$
On the other hand,
\begin{align*}
   &  \int_{\delta s_0}^{\frac{s_0}{\delta}} e^{-iA(\sin \theta s^\alpha-s)} e^{-\cos \theta A s^\alpha} s^{\frac{n-1}{2}} ds    \\
  = & \int_{\delta s_0}^{\frac{s_0}{\delta}} e^{-iA(\sin \theta s^\alpha-s)} e^{-\cos \theta A s^\alpha} s^{\frac{n-1}{2}}(\eta_1(s) + \eta_2(s)) ds,
\end{align*}
where $\eta_1(s)$ is smooth, supported in $[\delta s_0,   \frac{s_0}{\delta}]$ and equals 1 for $s\in [\delta' s_0,   \frac{s_0}{\delta'}]$ with $\delta<\delta'$; $\eta_2(s)=1-\eta_1(s)$.

By stationary phase method(\cite[Proposition 3, p.334]{S}), letting $\delta'$ close enough to 1 implies
\begin{align*}
   &  \int_{\delta s_0}^{\frac{s_0}{\delta}} e^{-iA(\sin \theta s^\alpha-s)} e^{-\cos \theta A s^\alpha} s^{\frac{n-1}{2}} \eta_1(s) ds    \\
  = & e^{-i A(\sin \theta s_0^\alpha-s_0)} e^{-\cos \theta A (\delta s_0)^\alpha} \\
     & \times \int_{\delta s_0}^{\frac{s_0}{\delta}} e^{-iA[\sin \theta (s^\alpha-s_0^\alpha)-(s-s_0)]} e^{-\cos \theta A (s^\alpha-(\delta s_0)^\alpha)} s^{\frac{n-1}{2}} \eta_1(s) ds    \\
  = &   e^{-i A(\sin \theta s_0^\alpha-s_0)} e^{-\cos \theta A (\delta s_0)^\alpha} A^{-\frac{1}{2}} d_0 + H_3(A),
\end{align*}
where
$$ d_0=\left( \frac{2\pi}{-i\alpha(\alpha-1)\sin \theta s_0^{\alpha-2}}  \right)^{-\frac{1}{2}}  s_0^{\frac{n-1}{2}} e^{-\cos \theta A s_0^\alpha+\cos \theta A (\delta s_0)^\alpha},$$
and
$$ |H_3(A)|\le C A^{-1} e^{-c\cos \theta A}. $$
We have used the facts  for $k\ge 0$, there exists $C_k$ such that
$$ \left| \frac{d}{ds^k} e^{-\cos \theta A s^\alpha + \cos \theta A (\delta s_0)}   \right|\le C_k, \hspace{1cm} \forall A\ge 1, ~~~~0<\omega\le \theta<\frac{\pi}{2}. $$
Moreover, we have(\cite[Corollary. p.334]{S})
\begin{align*}
   & \left|\int_{\delta s_0}^{\frac{s_0}{\delta}} e^{-iA(\sin \theta s^\alpha-s)} e^{-\cos \theta A s^\alpha} s^{\frac{n-1}{2}} \eta_2(s) ds \right| \\
  \le  & C A^{-1} \left[ e^{-\cos \theta A (\frac{s_0}{\delta})^\alpha} \left(\frac{s_0}{\delta}\right)^{\frac{n-1}{2}} + \right. \\
   &\left.   \int_{\delta s_0}^{\frac{s_0}{\delta}}  e^{-\cos \theta A s^\alpha} s^{\frac{n-1}{2}} \left( \cos \theta A s^{ \alpha-1}\eta_2(s) +  s^{-1}\eta_2(s) + \eta'_2(s) \right) ds \right] \\
  \le & C A^{-1} \left( e^{-\cos \theta A (\frac{s_0}{\delta})^\alpha} + e^{-\cos \theta A (\delta s_0)^\alpha} \cos \theta A   \right) \\
  \le & C A^{-1} e^{-\frac{1}{2}\cos \theta A (\delta s_0)^\alpha}.
\end{align*}
As a result, we have
\begin{equation}\label{3.2-15}
  J_2=C_{\theta, 1} A^{-\frac{1}{2}} e^{-zA s_0^\alpha+i A s_0} + H_4(A), \hspace{0.3cm} \text{with} \hspace{0.3cm} |H_4(A)|\le C A^{-1} e^{-c\cos \theta A},
\end{equation}
where $C, c>0$ are determined by $n, \alpha, \omega$.

Since there is no critical point, i.e. $ |i \sin \theta A s^\alpha + i s A |\ge \sin \omega A^{1-\alpha} +1 >0 $ for $s\ge A^{-1}$, we can use integration by parts to estimates
$$  \int_{A^{-1}}^{+\infty} e^{-zAs^\alpha-isA} s^{\frac{n-1}{2}} L_2{sA} ds = \int_{A^{-1}} ^1 + \int_1^{+\infty} \triangleq J'_1 + J'_2.   $$
Following the arguments for $J_1, J_3$,  similarly we obtain
\begin{equation}\label{3.2-16}
  |J'_1|\le \begin{cases}C_1 A^{-1}e^{-\cos\theta A^{1-\alpha}} & n\ge 2; \\   C_2 \frac{\ln A}{A}e^{-\cos\theta A^{1-\alpha}} & n= 1,    \end{cases}
\end{equation}
as well as
\begin{equation}\label{3.2-17}
  |J'_2|\le C_3 A^{-1} e^{-c\cos \theta A},
\end{equation}
where $C_1, C_2, C_3, c>0$ determined only by $n, \alpha$.

As a result, by \eqref{3.2-8},\eqref{3.2-13},\eqref{3.2-14},\eqref{3.2-15},\eqref{3.2-16}, \eqref{3.2-17}, we conclude that
\begin{equation}\label{3.2-18}
  |y|^{n\frac{1-\frac{\alpha}{2}}{\alpha-1}} A I_2= C_{\theta ,1} |y|^{n\frac{1-\frac{\alpha}{2}}{\alpha-1}} e^{-z |y|^{\frac{\alpha}{\alpha-1}} s_0^\alpha +i  |y|^{\frac{\alpha}{\alpha-1}} s_0} + \tilde{E}_3(y), + \tilde{E}_4(y),
\end{equation}
where
\begin{equation}\label{3.2-19}
  |\tilde{E}_3(y)|\le C_1 |y|^{n\frac{1-\frac{\alpha}{2}}{\alpha-1}+\frac{\alpha}{2(1-\alpha)}} e^{-c_1 \cos \theta |y|^{\frac{\alpha}{\alpha-1}} }, \hspace{1cm} |y|\le 1,
\end{equation}
and
\begin{equation}\label{3.2-20}
  |\tilde{E}_4(y)|\le \begin{cases}C_3 |y|^{n\frac{1-\frac{\alpha}{2}}{\alpha-1}+\frac{\alpha}{2(1-\alpha)}} e^{-\cos\theta |y|^{-\alpha}} & n\ge 2; \\   C_4 |\ln |y|| |y|^{n\frac{1-\frac{\alpha}{2}}{\alpha-1}+\frac{\alpha}{2(1-\alpha)}} e^{-\cos\theta |y|^{-\alpha}} & n= 1,    \end{cases} \hspace{1cm} |y|\le 1.
\end{equation}
Finally we have shown \eqref{theorem1.2} through \eqref{3.2-6},\eqref{3.2-7},\eqref{3.2-18},\eqref{3.2-19},\eqref{3.2-20}.

\end{proof}
\begin{proof}[Proof of \eqref{theorem1.2-2}]
Indeed, \eqref{theorem1.2-2} follows easily from the arguments in \cite[p.52]{W}. To be more precious, since
$$ [(1-\varphi(|\xi|))e^{-z|\xi|^\alpha}]^\vee(x)=c |x|^{-2}[\Delta (1-\varphi(|\xi|))e^{-z|\xi|^\alpha}]^\vee(x),  $$
and for $0<\alpha<1, k> \frac{n}{2(1-\alpha)}$
$$ \int_{\mathbb{R}^n} \left| \Delta^k  (1-\varphi(|\xi|))e^{-z|\xi|^\alpha} \right| d\xi <+\infty,  $$
we have proved \eqref{theorem1.2-2}.
\end{proof}
\subsection{Proof  of  Proposition 2.1 (3)}
\begin{proof}[Proof of \eqref{theorem1.3}]
For simplicity, set $\psi(|\xi|)=1-\varphi(|\xi|)$ and $P_2(z, y)$ can be written as
$$ P_2(z, y)=c_n \int_{|\xi|\ge \frac{1}{2}} e^{iy\cdot \xi} e^{-z|\xi|^\alpha} \psi(|\xi|) d\xi. $$
Note that we can not separate the integral into $\int_{\frac{1}{2}\le |\xi|\le 1} + \int_{|\xi|\ge 1}$ as in \eqref{3.2-1} to simplify our proof. This is because the integrand at $|\xi|=1$ does not decay as $|y|\rightarrow +\infty$ and hence the endpoint is hard to deal with after integrating by parts. For our purposes,
\begin{align*}
   & \int_{|\xi|\ge \frac{1}{2}} e^{iy\cdot \xi} e^{-z|\xi|^\alpha} \psi(|\xi|) d\xi \\
  = & C |y|^{n\frac{1-\frac{\alpha}{2}}{\alpha-1}} A \int_{\frac{1}{2}A^{-\frac{1}{\alpha}}}^{+\infty} \psi(sA^{\frac{1}{\alpha}}) e^{-zAs^\alpha} s^{\frac{n}{2}} J_{\frac{n}{2}-1} (sA) ds \\
  =& C |y|^{n\frac{1-\frac{\alpha}{2}}{\alpha-1}} A^{\frac{1}{2}} \int_{\frac{1}{2}A^{-\frac{1}{\alpha}}}^{+\infty} \psi(sA^{\frac{1}{\alpha}}) e^{-zAs^\alpha} e^{isA} s^{\frac{n-1}{2}} L_{1} (sA) ds \\
  & + C |y|^{n\frac{1-\frac{\alpha}{2}}{\alpha-1}} A^{\frac{1}{2}} \int_{\frac{1}{2}A^{-\frac{1}{\alpha}}}^{+\infty} \psi(sA^{\frac{1}{\alpha}}) e^{-zAs^\alpha} e^{-isA} s^{\frac{n-1}{2}} L_{2} (sA) ds,
\end{align*}
where $A=|y|^{\frac{\alpha}{\alpha-1}}$ and
$$ L_1(sA)=\sum_{k\ge 0} b_k (sA)^{-k}, \hspace{1cm} L_2(sA)=\sum_{k\ge 0} b'_k (sA)^{-k}. $$
Observe that $A\rightarrow +\infty$ as $|y|\rightarrow +\infty$ for $\alpha>1$.

To start with, consider
$$ \int_{\frac{1}{2}A^{-\frac{1}{\alpha}}}^{+\infty} \psi(sA^{\frac{1}{\alpha}}) e^{-zAs^\alpha+isA}  s^{\frac{n-1}{2}} L_{1} (sA) ds = \int_{\frac{1}{2}A^{-\frac{1}{\alpha}}}^{\delta s_0} + \int_{\delta s_0}^{\frac{s_0}{\delta}} + \int_{\frac{s_0}{\delta}}^{+\infty} \triangleq I_1 + I_2 + I_3,  $$
where $s_0=(\alpha \sin \theta)^{\frac{1}{1-\alpha}}$ and $\delta$ will be determined later.
Set $N_1=[\alpha+ \frac{n+1}{2}]+1$ and we have
\begin{align*}
  I_1 = & \sum_{k=0}^{N_1} b_k A^{-k} \int_{\frac{1}{2}A^{-\frac{1}{\alpha}}}^{\delta s_0} \psi(sA^{\frac{1}{\alpha}}) e^{-zAs^\alpha+isA}  s^{\frac{n-1}{2}-k} ds \\
   & + A^{-N_1} \int_{\frac{1}{2}A^{-\frac{1}{\alpha}}}^{\delta s_0}  \psi(sA^{\frac{1}{\alpha}}) e^{-zAs^\alpha+isA} s^{\frac{n-1}{2}-N_1} \sum_{k\ge N_1} b_k (sA)^{-k+N_1} ds.
\end{align*}
For $0\le k \le N_1$, integrating by parts $N_1$ times gives
\begin{align*}
   &A^{-k} \int_{\frac{1}{2}A^{-\frac{1}{\alpha}}}^{\delta s_0} \psi(sA^{\frac{1}{\alpha}}) e^{-zAs^\alpha+isA}  s^{\frac{n-1}{2}-k} ds   \\
  = & A^{-k} e^{-zA(\delta s_0)^\alpha+ iA \delta s_0} \sum_{l=1}^{N_1} C_l A^{-l} \\
  & + C'_{N_1} A^{-k-N_1} \int_{\frac{1}{2}A^{-\frac{1}{\alpha}}}^{\delta s_0} \sum_{\beta_1, \cdots, \beta_{N_1+2}} C_{\beta_1, \cdots, \beta_{N_1+2}} A^{\frac{\beta_1}{\alpha}} \psi^{(\beta_1)} (sA^{\frac{1}{\alpha}}) s^{\frac{n-1}{2}-k-\beta_2} \times \\
  & h^{(\beta_3)}(s) \cdots  h^{(\beta_{N_1+2})}(s) e^{-zAs^\alpha+isA} ds \\
  \triangleq & A^{-k} e^{-zA(\delta s_0)^\alpha+ iA \delta s_0} \sum_{l=1}^{N_1} C_l A^{-l} + H_4(A),
\end{align*}
where $\beta_k\ge 0$ are integers satisfying $\beta_1+ \cdots + \beta_{N_1+2} = N_1$ and $h(s)=(-\alpha z s^{\alpha-1} + i )^{-1}$. By Lemma 4.1, we obtain
$$ |H_4(A)| \le C A^{-k-N_1+\frac{\beta_1}{\alpha}}  \int_{\frac{1}{2}A^{-\frac{1}{\alpha}}}^{\delta s_0} \sum_{\beta_1, \cdots, \beta_{N_1+2}} |\psi^{(\beta_1)} (sA^{\frac{1}{\alpha}})| s^{\frac{n-1}{2}-k-\beta_2-\cdots - \beta_{N_1+2}} ds.  $$
When $\beta_1=0$, it implies
\begin{align*}
  |H_4(A)| &\le C A^{-k-N_1}  \int_{\frac{1}{2}A^{-\frac{1}{\alpha}}}^{\delta s_0}  s^{\frac{n-1}{2}-k-N_1} ds  \\
           & \le C A^{-k-N_1} A^{-\frac{1}{\alpha}(\frac{n-1}{2}-k-N_1+ 1)} \int_{\frac{1}{2}}^{\delta s_0 A^{\frac{1}{\alpha}}}  s^{\frac{n-1}{2}-k-N_1+ \beta_1} ds\\
           &\le C A^{-\frac{n+1}{2} + (1-\frac{1}{\alpha})(\frac{n+1}{2}-k-N_1) }\\
           & \le C A^{-\frac{n-1}{2}-\alpha}.
\end{align*}
We have used the facts $N_1-\frac{n+1}{2}\ge \alpha$ in the last inequality.

When $\beta_1 \ge 1$, we have
\begin{align*}
  |H_4(A)| &\le C A^{-k-N_1+\frac{\beta_1}{\alpha}}  \int_{\frac{1}{2}A^{-\frac{1}{\alpha}}}^{A^{-\frac{1}{\alpha}}}   s^{\frac{n-1}{2}-k-N_1+\beta_1} ds  \\
           & \le C A^{-k-N_1+\frac{\beta_1}{\alpha}} A^{-\frac{1}{\alpha}(\frac{n-1}{2}-k-N_1+ \beta_1 +1)}\\
            &\le C A^{-\frac{n+1}{2} + (1-\frac{1}{\alpha})(\frac{n+1}{2}-k-N_1) }\\
           & \le C A^{-\frac{n-1}{2}-\alpha}.
\end{align*}
As a result, the following estimate holds for $|H_4|(A)$,
\begin{equation}\label{3.2-21}
  |H_4(A)|\le C A^{-\frac{n-1}{2}+\alpha},  \hspace{1cm} \forall A\ge1.
\end{equation}
Together with the following estimates
\begin{align*}
   & A^{-N_1} \left| \int_{\frac{1}{2}A^{-\frac{1}{\alpha}}}^{\delta s_0}  \psi(sA^{\frac{1}{\alpha}}) e^{-zAs^\alpha+isA} s^{\frac{n-1}{2}-N_1} \sum_{k\ge N_1} b_k (sA)^{-k+N_1} ds \right| \\
  \le & C  A^{-N_1} \int_{\frac{1}{2}A^{-\frac{1}{\alpha}}}^{\delta s_0} s^{\frac{n-1}{2}-N_1}  ds\\
  \le & C A^{(\frac{1}{\alpha}-1)N_1-\frac{n+1}{2\alpha}} \le C A^{-\frac{n-1}{2}+\alpha},
\end{align*}
\eqref{3.2-21} implies
$$  I_1=  e^{-zA(\delta s_0)^\alpha+ iA \delta s_0} \sum_{k=1}^{N_1} C_k A^{-k} + H_5(A),  $$
and
$$ |H_5(A)| \le C  A^{-\frac{n-1}{2}+\alpha},  \hspace{1cm} \forall A\ge1. $$
In turn, we obtain that
\begin{equation}\label{3.2-22}
  |y|^{n\frac{1-\frac{\alpha}{2}}{\alpha-1}} A^{\frac{1}{2}} I_1 =  |y|^{n\frac{1-\frac{\alpha}{2}}{\alpha-1}} A^{-\frac{1}{2}} e^{-zA(\delta s_0)^\alpha+ iA \delta s_0} + \bar{E}_1 (A)+ \bar{E}_2(A),
\end{equation}
where
\begin{equation}\label{3.2-23}
  |\bar{E}_1(A)| \le C_1 |y|^{-n\frac{1-\frac{\alpha}{2}}{1-\alpha}+\frac{3\alpha}{2(1-\alpha)}} e^{-C_2 \cos \theta |y|^{\frac{\alpha}{\alpha-1}}}, \hspace{0.5cm} \forall |y|\ge1,
\end{equation}
and
\begin{equation}\label{3.2-24}
  |\bar{E}_2(A)| \le C_3 |y|^{-n-\alpha} \hspace{1cm} \forall |y|\ge1,
\end{equation}
where $C_1, C_2, C_3>0$ are only determined by $n, \alpha, \omega$.

Since $\psi(sA^{\frac{1}{\alpha}})=1$  for $  s\ge A^{-\frac{1}{\alpha}} $, then for $|A|\gg 1$ we have
$$ I_2=\int_{\delta s_0}^{\frac{s_0}{\delta}} e^{-zAs^{\alpha} + i sA} s^{\frac{n-1}{2}} L_1(sA) ds~~~~ \text{and} ~~~~ I_3=\int_{\frac{s_0}{\delta}}^{+\infty} e^{-zAs^{\alpha} + i sA} s^{\frac{n-1}{2}} L_1(sA) ds.   $$
Then the proof are almost the same as in the case $0<\alpha<1$ and we omit the details. It follows that
\begin{equation}\label{3.2-25}
  |I_3|\le C_1 A^{-1} e^{-C_2  \cos \theta A};
\end{equation}
\begin{equation}\label{3.2-26}
  I_2= C_{\theta, 1} A^{-\frac{1}{2}}  e^{ -z A s_0^\alpha + iA s_0 } + H_5(A)  \hspace{0.5cm} \text{with} \hspace{0.5cm} |H_5(A)|\le C_3 A^{-1} e^{-C_4 \cos \theta A},
\end{equation}
for some constants $C_1, C_2, C_3, C_4>0$ only determined by $n, \alpha, \omega$.

The estimates for
$$ C |y|^{n\frac{1-\frac{\alpha}{2}}{\alpha-1}} A^{\frac{1}{2}} \int_{\frac{1}{2}A^{-\frac{1}{\alpha}}}^{+\infty} \psi(sA^{\frac{1}{\alpha}}) e^{-zAs^\alpha} e^{-isA} s^{\frac{n-1}{2}} L_{2} (sA) ds, $$
are easer than the above proof due to the facts there is no critical points. The proof are minor correction to the above arguments and we omit the detail. Combing \eqref{3.2-22}, \eqref{3.2-23}, \eqref{3.2-24}, \eqref{3.2-25}, \eqref{3.2-26} implies \eqref{theorem1.3}.
\end{proof}
\begin{proof}[Proof of \eqref{theorem1.3-2}]
In fact, \eqref{theorem1.3-2} can be shown by Laplace transform. Firstly,
\begin{align*}
  P(z, y) & = c_n  |y|^{1-\frac{n}{2}} \int_0^{+\infty} e^{-zr^{\alpha}} r^{\frac{n}{2}} J_{\frac{n}{2}-1} (r|y|) dr  \\
   & = C |y|^{1-\frac{n}{2}} \int_0^{+\infty} e^{-zs} s^{\frac{n}{2\alpha}+\frac{1}{\alpha}-1} J_{\frac{n}{2}-1} (s^{\frac{1}{\alpha}}|y|) ds.
\end{align*}
By \eqref{Bessel def}, we have
\begin{align*}
  P(z, y) & =C |y|^{1-\frac{n}{2}} \int_0^{+\infty} e^{-zs} s^{\frac{n}{2\alpha}+\frac{1}{\alpha}-1} \sum_{k\ge0} \frac{(-1)^k}{k! \Gamma(k+\frac{n}{2})} \left( \frac{s^{\frac{1}{\alpha}} |y| }{2}  \right)^{2k+\frac{n}{2}-1} ds \\
          & =C \sum_{k\ge 0} \frac{(-1)^k 2^{-2k-\frac{n}{2}+1}}{k! \Gamma(k+\frac{n}{2})} |y|^{2k} \int_0^{+\infty} e^{-zs} s^{\frac{n+2k}{\alpha}-1}ds \\
          & = C z^{-\frac{n}{\alpha}} \sum_{k\ge 0} \frac{(-1)^k \Gamma(\frac{n+2k}{\alpha})}{4^k k! \Gamma(k+\frac{n}{2})} z^{-\frac{2k}{\alpha}} |y|^{2k}.
\end{align*}
The converge radius of above series is $(0, +\infty)$  for $\alpha>1$. Together with \eqref{theorem1.1}, the above implies \eqref{theorem1.3-2}.
\end{proof}
\section{Proof of Theorem 1.1 and Theorem 1.3}
\begin{proof}[Proof of Theorem 1.1]
Set $\omega=\frac{\pi}{4}$. In view of Proposition 2.1, for $0<\alpha<1$, $|y|\ge 1$, $\frac{\pi}{4}\le |\theta|<\frac{\pi}{2}$, we have
$$ |P(e^{i\theta}, y)|   \le C |y|^{-n-\alpha}. $$
On the other hand, by \eqref{theorem1.2} and \eqref{theorem1.2-2}, we obtain for $0<\alpha<1$, $|y|\le 1$, $\frac{\pi}{4}\le |\theta|<\frac{\pi}{2}$
$$ |P(e^{i\theta}, y)|   \le  C(1 + |y|^{-n\frac{1-\frac{\alpha}{2}}{1-\alpha}} e^{-c\cos \theta |y|^{\frac{\alpha}{\alpha-1}}}  )   $$
Therefore, by \eqref{remark1.2} we obtain for $0<\alpha<1$, $0\le |\theta|<\frac{\pi}{2}$,
$$|P(e^{i\theta}, y)| \le \begin{cases} C_1  (1 + |y|^{-n\frac{1-\frac{\alpha}{2}}{1-\alpha}} e^{-c\cos \theta |y|^{\frac{\alpha}{\alpha-1}}}  ) , & |y|\le 1; \\ C_2  |y|^{-n-\alpha}, & |y|> 1.  \end{cases}$$
Since $P(z, x)=|z|^{-\frac{n}{\alpha}} P(e^{i\theta}, y)  $ with $y=\frac{x}{|z|^{\frac{1}{\alpha}}}$, \eqref{theorem2.1} follows.

When $\alpha>1$, by Proposition 2.1, we have for $\alpha>1$, $|y|\ge 1$, $ \frac{\pi}{4}\le |\theta|<\frac{\pi}{2} $
$$ |P(e^{i\theta}, y)|   \le  C(|y|^{-n-\alpha} + |y|^{-n\frac{1-\frac{\alpha}{2}}{1-\alpha}} e^{-c\cos \theta |y|^{\frac{\alpha}{\alpha-1}}}  ).  $$
In turn, combining the estimates \eqref{remark1.2} we conclude for $\alpha>1$, $0\le |\theta|<\frac{\pi}{2}$
$$|P(e^{i\theta}, y)| \le \begin{cases} C_1,   & |y|\le 1; \\ C_2  (|y|^{-n-\alpha} + |y|^{-n\frac{1-\frac{\alpha}{2}}{1-\alpha}} e^{-c\cos \theta |y|^{\frac{\alpha}{\alpha-1}}}  ),  & |y|> 1.  \end{cases}$$
And hence \eqref{theorem2.2} follows.
\end{proof}


Now we are ready to  consider the fractional Schr\"odinger operator with Kato potentials. We adopt  the methods in \cite{BJ, HWZD} to prove Theorem 1.3. Set
$$ I(|z|,x)= |z|^{-\frac{n}{\alpha}} \wedge \frac{|z|}{|x|^{n+\alpha}}. $$
According to \eqref{theorem2.1}, we have
\begin{align*}
  |P(z, x)| & \le C \left[|z|^{-\frac{n}{\alpha}}+ \frac{|z|^{ \frac{n}{2(1-\alpha)} }  }{ |x|^{n\frac{1-\frac{\alpha}{2}}{1-\alpha}} } \exp \left( -C_2 \frac{|x|^{\frac{\alpha}{\alpha-1}}}{|z|^{\frac{1}{\alpha-1}}} \cos \theta \right)  \right]  \wedge \frac{|z|}{|x|^{n+\alpha}}  \\
   & \le C \left[  |z|^{-\frac{n}{\alpha}}+ |z|^{-\frac{n}{\alpha}}(\cos \theta)^{-\frac{n}{\alpha} + \frac{n}{2}} \right] \wedge \frac{|z|}{|x|^{n+\alpha}}\\
   & \le C (\cos \theta)^{-\frac{n}{\alpha} + \frac{n}{2}} |z|^{-\frac{n}{\alpha}}\wedge \frac{|z|}{|x|^{n+\alpha}}.
\end{align*}
In the second step, we have used the facts
$$  t^{\gamma}e^{-p t}\le  p^{-\gamma}, \hspace{1cm} \forall t, \gamma, p>0.  $$
Then there exist constants $D_1, D_2>0$ depending only on $n, \alpha$ such that
\begin{equation}\label{cnostant1}
  |P(z, x)| \le D_1 (\cos \theta)^{-\frac{n}{\alpha}+\frac{n}{2}} I(|z|, x), \hspace{1cm}  0<\alpha<1, ~~~~\forall z\in \mathbb{C}^+, x\in \mathbb{R}^n,
\end{equation}
and
\begin{equation}\label{cnostant2}
  |P(z, x)| \le D_2 (\cos \theta)^{-\frac{n}{2}-\alpha+1}  I( |z|, x), \hspace{1cm}  \alpha>1, ~~~~\forall z\in \mathbb{C}^+, x\in \mathbb{R}^n.
\end{equation}
Next we only prove \eqref{theorem3.1} in details cause minor correction of the proof will show \eqref{theorem3.2}.

Following \cite{HWZD}, we need some characterizations of Kato potentials.
\begin{lemma}
$V\in K_{\alpha}(\mathbb{R}^n)$ if and only if $\lim_{t\rightarrow 0} K_V(t)=0$, where
$$ K_V(t)= \sup_x  \int_{\mathbb{R}^n} J(t, x-y) |V(y)|dy, $$
and
$$ J(t,x)=\begin{cases}|x|^{\alpha-n} \wedge t^2|x|^{-n-\alpha}, &  0<\alpha<n, \\ (1\vee \ln (t|x|^{-n})) \wedge t^2|x|^{-2n}, & \alpha=n, \\ t^{1-n/\alpha} \wedge t^2|x|^{-n-\alpha}, & \alpha>n.  \end{cases} $$
\end{lemma}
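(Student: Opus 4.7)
The proof splits into the two implications, and the underlying geometric fact driving both is that the minimum in the definition of $J(t,x)$ switches between its two ingredients at the threshold $|x|\asymp t^{1/\alpha}$. Explicitly, on the near region $|x|\le t^{1/\alpha}$ one has $J(t,x)=w_\alpha(x)$ (respectively $t^{1-n/\alpha}$ when $\alpha>n$), while on the far region $|x|>t^{1/\alpha}$ one has $J(t,x)=t^2|x|^{-n-\alpha}$. Thus $J(t,\cdot)$ is a truncation of $w_\alpha$ that is integrable at infinity for every fixed $t>0$.

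For the ``if'' direction I would realize the Kato small-ball integral by the near region. Assume $\lim_{t\to 0}K_V(t)=0$. In the case $0<\alpha<n$, given $\delta>0$ set $t=\delta^\alpha$; then $|x-y|\le\delta\le t^{1/\alpha}$ forces $J(t,x-y)=|x-y|^{\alpha-n}=w_\alpha(x-y)$, and consequently
\[
\sup_x\int_{|x-y|<\delta} w_\alpha(x-y)|V(y)|\,dy \;\le\; K_V(\delta^\alpha)\;\longrightarrow\;0.
\]
The case $\alpha=n$ is analogous with $t=\delta^n$ and a direct comparison of $1\vee\ln(t|x-y|^{-n})$ with $\ln|x-y|^{-n}$ on $|x-y|<\delta$. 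For $\alpha>n$, $J(t_0,x-y)$ is bounded below by a positive constant on $|x-y|<1$ for any fixed $t_0$, so $\sup_x\int_{|x-y|<1}|V(y)|\,dy\le c(t_0)^{-1}K_V(t_0)<\infty$.

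For the ``only if'' direction, fix $\varepsilon>0$. The pointwise bound $J(t,x-y)\le w_\alpha(x-y)$ (valid in all three regimes, since $J$ is a minimum containing the first alternative) combined with the Kato assumption produces $\delta_0>0$ such that the ``near'' contribution to $K_V(t)$ over $|x-y|\le\delta_0$ is bounded by $\varepsilon/2$ uniformly in $x$ and $t$. The ``far'' contribution is majorized by
\[
\sup_x\int_{|x-y|>\delta_0} t^2|x-y|^{-n-\alpha}|V(y)|\,dy.
\]
I would control this using the uniform local $L^1$ estimate $M_V:=\sup_x\int_{|x-y|<1}|V(y)|\,dy<\infty$, which follows from $V\in K_\alpha(\mathbb{R}^n)$ in each regime (from the Kato definition together with $w_\alpha\gtrsim 1$ on small balls when $0<\alpha<n$, from a splitting around $|x-y|=e^{-1/n}$ when $\alpha=n$, and directly when $\alpha>n$). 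A dyadic decomposition over annuli $\{2^k\delta_0<|x-y|\le 2^{k+1}\delta_0\}$ gives a summable series since $2^{-k(n+\alpha)}(2^k\delta_0\vee 1)^n\le C(2^{-k(n+\alpha)}+2^{-k\alpha}\delta_0^n)$, yielding a bound of the form $C(V,\delta_0)t^2$ that vanishes as $t\to 0$.

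The main obstacle is the uniformity of the local $L^1$ bound on $V$ and the dyadic summation in the far region, which must be handled slightly differently in each of the three cases $0<\alpha<n$, $\alpha=n$, $\alpha>n$. Once this is in hand, the proof is essentially a two-region splitting argument in the spirit of Aizenman--Simon type characterizations of Kato classes, and the lemma follows by choosing $\delta_0$ first (to make the near part $<\varepsilon/2$) and then $t$ small (to make the far part $<\varepsilon/2$).
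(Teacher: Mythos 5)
The paper does not actually prove this lemma: it simply cites \cite{HWZD} for the proof, so there is no in-paper argument to compare yours against. Your two-region splitting at the threshold $|x-y|\asymp t^{1/\alpha}$ is the standard route (and is essentially what the cited reference does), and the key steps are all sound: in the ``only if'' direction, the near part is controlled by the Kato condition via $J(t,\cdot)\le w_\alpha$ on small balls, and the far part by the uniform local $L^1$ bound $M_V<\infty$ together with the dyadic summation, which does converge as you indicate; in the ``if'' direction, the identification of $J(t,\cdot)$ with $w_\alpha$ on the near region is exact for $0<\alpha<n$ and the $\alpha>n$ case is immediate.

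The one place you should be more careful is the ``if'' direction when $\alpha=n$. With $t=\delta^n$, on $|x-y|<\delta$ one has $\ln\bigl(t|x-y|^{-n}\bigr)=n\ln\bigl(\delta/|x-y|\bigr)$, and the ratio of this to $w_n(x-y)=n\ln\bigl(1/|x-y|\bigr)$ tends to $0$ as $|x-y|\to\delta^-$; so there is no uniform ``direct comparison'' $J(t,x-y)\gtrsim w_n(x-y)$ on the full ball of radius $t^{1/n}$. The fix is easy but should be stated: restrict to a strictly smaller ball, e.g.\ $|x-y|<\delta^2$, where $\ln(\delta/|x-y|)\ge\tfrac12\ln(1/|x-y|)$, giving $\sup_x\int_{|x-y|<\delta^2}w_n(x-y)|V(y)|\,dy\le 2K_V(\delta^n)\to0$. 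With that adjustment (and the short argument you already sketch for $M_V<\infty$ in each regime), your proof is complete.
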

\begin{proof}
The proof can be found in \cite{HWZD}.
\end{proof}
Denote by $\tilde{H}=e^{i\theta}(-\Delta)^{\frac{\alpha}{2}}+e^{i\theta}V$. Then we have $e^{-z((-\Delta)^{\frac{\alpha}{2}}+V)}=e^{-|z|\tilde{H}}$. To start with, set
$$ \tilde{K}_j(|z|,x, y)=\int_{\mathbb{R}^n} \int_0^{|z|} \tilde{K}_{j-1}(|z|-s, x, \zeta) e^{i\theta} V(\zeta) \tilde{K}_0(s, \zeta, y) ds d\zeta, \hspace{1cm} j\in \mathbb{N}^*,  $$
where $\tilde{K}_0(|z|, x, y)= P(z, x-y)$.

Then we have the following estimate for $\tilde{K}_j(|z|, x, y).$
\begin{lemma}
Let $0< \alpha< 1$. There exists a constant $\omega$ depending on $n, \alpha$ such that the following holds for $j\in \mathbb{N}^*$
$$ |\tilde{K}_j(|z|,x,y)| \le D_1 (w\tilde{K}_V(|z|))^j \tilde{I}(|z|, x-y), $$
where $\tilde{I}(|z|, x-y)=\eta I(|z|, x),~~ \tilde{K}_V(|z|)= \eta  K_V( |z|)$ for $\eta= (\cos \theta)^{-\frac{n}{\alpha}+ \frac{n}{2}}$ and $D_1$ is the constant in \eqref{cnostant1}.
\end{lemma}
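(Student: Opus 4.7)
The plan is to argue by induction on $j$, with the key technical ingredient being a 3P-type convolution inequality
\[
\int_{\mathbb{R}^n}\!\!\int_0^{t} I(t-s,\, x-\zeta)\, I(s,\, \zeta-y)\, |V(\zeta)|\, ds\, d\zeta \;\le\; \omega_1\, K_V(t)\, I(t,\, x-y),
\]
valid for all $t>0$ and $x,y\in\mathbb{R}^n$, with a constant $\omega_1$ depending only on $n,\alpha$. This is the standard Kato-perturbation estimate; it follows from a pointwise bound of the form $I(t-s, x-\zeta)I(s, \zeta-y) \le C[J(t-s, x-\zeta) + J(s, \zeta-y)]\,I(t, x-y)$ (with $J$ the kernel of Lemma~3.1) combined with the definition of $K_V$, after splitting the time integral at $t/2$ and exchanging the roles of $x-\zeta$ and $\zeta-y$. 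I would cite the version in \cite{HWZD} since the argument is structurally identical to the real-time case.

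For the base case $j=1$, note that $\tilde K_0(t,x,y)=P(e^{i\theta}t,x-y)$ with $t>0$, so \eqref{cnostant1} yields $|\tilde K_0(t,x,y)|\le D_1\eta\, I(t,x-y)$. Substituting this into the defining integral for $\tilde K_1$ gives
\[
|\tilde{K}_1(|z|,x,y)| \;\le\; D_1^2\,\eta^2 \int_{\mathbb{R}^n}\!\!\int_0^{|z|} I(|z|-s, x-\zeta)\,|V(\zeta)|\, I(s, \zeta-y)\, ds\, d\zeta.
\]
Applying the 3P inequality bounds the right-hand side by $D_1^2\omega_1\eta^2 K_V(|z|)\,I(|z|,x-y)$. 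Defining $\omega:=D_1\omega_1$, this rearranges to $D_1\,(\omega\tilde K_V(|z|))\,\tilde I(|z|,x-y)$, as required.

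For the inductive step, assume the estimate for some $j\ge 1$. In the recursion defining $\tilde K_{j+1}$, bound $|\tilde K_j(|z|-s,x,\zeta)|$ by the inductive hypothesis and $|\tilde K_0(s,\zeta,y)|$ by \eqref{cnostant1}. Using the monotonicity $K_V(|z|-s)\le K_V(|z|)$ (which holds because $J(t,\cdot)$ is nondecreasing in $t$), the factor $(\omega\tilde K_V(|z|))^j$ can be pulled outside the integrals. A second application of the 3P inequality then produces
\[
|\tilde K_{j+1}(|z|,x,y)| \;\le\; D_1^2\,\omega_1\,\eta^{j+2}\,\omega^j\, K_V(|z|)^{j+1}\, I(|z|,x-y),
\]
which, by $\omega=D_1\omega_1$, collapses to $D_1\,(\omega\tilde K_V(|z|))^{j+1}\,\tilde I(|z|,x-y)$, closing the induction.

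The main obstacle is the 3P inequality, but since this is essentially the same as in \cite{HWZD} (with $t$ now playing the role of $|z|$ and an extra factor of $\eta$ absorbed into $\tilde I$ and $\tilde K_V$), the proof reduces to bookkeeping. The critical structural point deserving attention is the definition $\omega:=D_1\omega_1$: this choice is what absorbs the extra factor of $D_1$ generated at each iteration by \eqref{cnostant1}, ensuring that the overall prefactor grows only as $D_1\,(\omega\tilde K_V(|z|))^j$ rather than as $D_1^{j+1}$ times lower-order constants, which is exactly what is needed downstream for summability of the Dyson series defining $K(z,x,y)$.
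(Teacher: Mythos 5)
Your proof is correct and follows essentially the same route as the paper: induction on $j$ driven by the 3P-type convolution inequality (product of two $I$'s written as $\min\times\max$, the $\min$ controlled by $I(|z|,x-y)$, the time integral of the $\max$ producing $J$ and hence $K_V$), with the choice $\omega=D_1\omega_1$ performing exactly the bookkeeping the paper does to avoid $D_1^{j+1}$ growth. The only imprecision is your intermediate ``pointwise bound'' involving $J$ --- the $J$'s appear only after integrating the $\max$ in time, not pointwise --- but since you defer to \cite{HWZD} for this step, as the paper itself does, nothing is lost.
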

\begin{proof}
When $j=0$, it is just \eqref{cnostant1}.\\
Note first that
\begin{align*}
 \tilde{I}(|z|,x) \wedge \tilde{I} (s, y)   & = \eta ( I( |z|, x)\wedge I( s, y)  )  \\
   & \le D_3\eta I((|z|+s), x+y)=D_3 \tilde{I}(|z|+s, x+y),
\end{align*}
where $D_3=2^{\alpha-1} \vee 2^{\frac{n}{2\alpha}}$. And hence
\begin{align*}
 \tilde{I}(|z|,x) \tilde{I}(s,y)  & =( \tilde{I}(|z|,x) \wedge \tilde{I}(s,y) )(\tilde{I}(|z|,x) \vee \tilde{I}(s,y) )  \\
   & \le D_3 \tilde{I}(|z|+s, x+y) ( \tilde{I}(|z|,x) \vee \tilde{I}(s,y)).
\end{align*}
Moreover, we have
\begin{align*}
 \int_0^{|z|} \tilde{I} (|z|-s,x) ds =\int_0^{|z|} \tilde{I} (s,x) ds & \le e\int_0^\infty e^{-\frac{s}{|z|}} \tilde{I} (s,x) ds  \\
   &  =e\eta \int_0^\infty e^{-\frac{s}{|z|}} I( s, x)ds\\
   & \le e D_4 \eta J(|z|, x).
\end{align*}
The proof of the last inequality can be found in \cite{HWZD}. Then we have by induction,
\begin{align*}
  & |\tilde{K}_j(|z|,x,y)|\\
  \le &  D_1^2 (w\tilde{K}_V)^{j-1} \int_{\mathbb{R}^n} \int_0^{|z|} \tilde{I}(|z|-s, x-\zeta) \wedge \tilde{I}(s, \zeta-y) |V(\zeta)| dsd\zeta  \\
 \le &  D_1^2 D_3 (w\tilde{K}_V)^{j-1} \tilde{I} (|z|, x-y) \int_{\mathbb{R}^n} \int_0^{|z|} \tilde{I}(|z|-s, x-\zeta) \vee \tilde{I}(s, \zeta-y) |V(\zeta)| dsd\zeta \\
  \le &  eD_1^2 D_3 D_4 (w\tilde{K}_V)^{j-1} \tilde{I} (|z|, x-y) \eta \int_{\mathbb{R}^n} J( |z|, x-\zeta)\vee J( |z|, \zeta-y) |V(\zeta)| d\zeta.
\end{align*}
Let $\omega=eD_1 D_3 D_4$ and by the definition of $\tilde{K}_V(\zeta)$ we get the desired result.
\end{proof}
To proceed, let
$$ T_j(|z|)f(x)=\int_{\mathbb{R}^n}\tilde{K}_j(|z|,x,y)f(y)dy,  $$
where $f\in L^1.$ Then we have the following lemma.
\begin{lemma}
Let $0< \alpha< 1$ and $\tilde{H}=e^{i\theta}(-\Delta)^{\frac{\alpha}{2}}+e^{i\theta}V$ for $0\le |\theta|<\frac{\pi}{2}$ where $V\in K_\alpha(\mathbb{R}^n).$ Then the following holds for every $|z|>0$
\begin{equation}\label{step2}
  \lim_{N\rightarrow \infty} \| e^{-|z| \tilde{H}} -\sum_{j=0}^{N} (-1)^j T_j(|z|)  \|_{L^1,L^1}=0.
\end{equation}
\end{lemma}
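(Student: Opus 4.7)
The plan is to establish the Dyson series for $e^{-|z|\tilde H}$ by first upgrading the kernel bounds of Lemma~3.2 to operator bounds, then identifying the limit via a Duhamel/Volterra equation, and finally bootstrapping from short to arbitrary $|z|$ by the semigroup property.

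First I would convert Lemma~3.2 into an $L^1\to L^1$ estimate. The substitution $x=|z|^{1/\alpha}u$ shows $\int_{\mathbb{R}^n} I(|z|,x)\,dx = C_{n,\alpha}$ is finite and independent of $|z|$, so Lemma~3.2 yields
$$ \|T_j(|z|)\|_{L^1\to L^1}\le D_1 C_{n,\alpha}\,\eta\,(\omega\tilde K_V(|z|))^j. $$
By Lemma~3.1, $V\in K_\alpha(\mathbb{R}^n)$ implies $K_V(t)\to 0$ as $t\to 0^+$, so one can fix $t_0>0$ with $\omega\tilde K_V(t_0)\le 1/2$. Then on $|z|\le t_0$ the series $S(|z|):=\sum_{j=0}^\infty(-1)^j T_j(|z|)$ converges absolutely in $L^1\to L^1$ with geometric rate $(1/2)^j$, and the partial sums differ from $S(|z|)$ by $O((1/2)^N)$.

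Second, I would identify $S(|z|)$ with $e^{-|z|\tilde H}$ on $|z|\le t_0$. A direct telescoping from the definition of $\tilde K_j$, using that $T_0(\cdot)=e^{-\cdot\, e^{i\theta}(-\Delta)^{\alpha/2}}$ is a holomorphic convolution semigroup, shows that $S$ solves the Duhamel integral equation
$$ S(|z|)=T_0(|z|)-\int_0^{|z|} T_0(|z|-s)\,(e^{i\theta}V)\,S(s)\,ds, $$
which is also satisfied by $e^{-|z|\tilde H}$. A Gronwall argument applied to $\|S(|z|)-e^{-|z|\tilde H}\|_{L^1\to L^1}$, using the short-time smallness $\omega\tilde K_V(t_0)\le 1/2$, forces the two to coincide, so \eqref{step2} follows for $|z|\le t_0$ with explicit geometric rate.

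Finally I would extend to all $|z|>0$ by splitting $|z|=kt_0+r$ with $0\le r<t_0$ and exploiting the semigroup identity $e^{-|z|\tilde H}=(e^{-t_0\tilde H})^k e^{-r\tilde H}$, together with the Fubini identity $T_j(t_1+t_2)=\sum_{j_1+j_2=j}T_{j_1}(t_1)T_{j_2}(t_2)$ that follows directly from the definition of $\tilde K_j$ and the convolution semigroup property of $T_0$. Multiplying $k+1$ absolutely convergent series gives the same identification on $[0,|z|]$. The main obstacle is the identification step in the middle: short-interval norm convergence of the series is essentially immediate from Lemma~3.2, but tying the norm limit $S(|z|)$ to $e^{-|z|\tilde H}$ (originally defined via form methods on $L^2$) requires the Duhamel equation plus a short-time uniqueness argument, which is the usual circularity in Dyson-series constructions of perturbed semigroups and is handled as in \cite{BJ, HWZD}.
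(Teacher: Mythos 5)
Your route is genuinely different from the paper's. You work entirely in the time domain: short-time operator-norm convergence of the Dyson series from the kernel bounds of Lemma~3.2, identification of the sum with the semigroup via a Duhamel equation, and extension to all $|z|>0$ by the Cauchy-product identity $T_j(t_1+t_2)=\sum_{j_1+j_2=j}T_{j_1}(t_1)T_{j_2}(t_2)$ together with the semigroup law. The paper instead works in the resolvent (Laplace) domain: the Kato-class relative bound of \cite{ZY} gives $\| e^{i\theta}V(\mu+e^{i\theta}(-\Delta)^{\alpha/2})^{-1}\|_{L^1,L^1}<\tfrac12$ on a shifted sector, so the Neumann series for $(\mu+\tilde H)^{-1}$ converges with remainder $O(2^{-N})$ uniformly on the contour; inserting this into the contour-integral representation of the analytic semigroup gives \eqref{step2} for \emph{every} $|z|>0$ in one stroke, and the terms of the series are matched with $T_j(|z|)$ by computing the Laplace transform of $\tilde K_j$ (justified by the bounds of Lemma~3.2 and Fubini) and invoking uniqueness of Laplace transforms. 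The paper's approach thus needs no short-time restriction and no bootstrapping, at the cost of setting up the contour machinery; yours avoids the contour but must split $|z|$ and multiply series.

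The one step of your argument that does not go through as written is the identification of $S(|z|)$ with $e^{-|z|\tilde H}$ by ``Duhamel plus Gronwall.'' The operator $e^{i\theta}V$ is only a Kato-class multiplication operator, not bounded on $L^1$, so the difference equation $D(t)=-\int_0^t T_0(t-s)\,e^{i\theta}V\,D(s)\,ds$ cannot be closed by a Gronwall inequality in the $L^1\to L^1$ norm: the map $u\mapsto T_0(t-s)Vu$ is controlled only through the kernel estimate $\int I(t-s,x-\zeta)|V(\zeta)|\,d\zeta$, which requires an a priori pointwise kernel bound on $D(s)$, i.e.\ on the kernel of $e^{-s\tilde H}$ --- exactly the circularity you name. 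You defer this to \cite{BJ,HWZD}, but it is the crux of the lemma rather than a routine verification; the paper's resolvent route is precisely the device that breaks the circle, replacing the time-domain uniqueness argument by the smallness of $e^{i\theta}V(\mu+e^{i\theta}(-\Delta)^{\alpha/2})^{-1}$ and the injectivity of the Laplace transform. To make your version self-contained you would need to either (i) prove first that $\tilde H$ generates an analytic semigroup on $L^1$ whose kernel admits a bound of the form $C\,\tilde I(t,x-y)$ for small $t$ (so the iteration of the difference equation can be run at the kernel level), or (ii) fall back on the Laplace-transform uniqueness argument, at which point you have essentially reproduced the paper's proof.
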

\begin{proof}
Note first that $e^{i\theta}(-\Delta)^{\frac{\alpha}{2}}$ generates an analytic semigroup of angle $\frac{\pi}{2}-|\theta|$ on $L^1(\mathbb{R}^n).$ Since $V \in K_\alpha(\mathbb{R}^n)$, then for each $\varepsilon >0,$ there exists $C_\varepsilon>0$ such that(\cite{ZY})
$$ \| e^{i\theta} V\phi \|_{L^1} \le \varepsilon \| e^{i\theta} (-\Delta)^{\frac{\alpha}{2}} \phi \|+C_\varepsilon \| \phi \|_{L^1} \hspace{1cm} \forall \phi \in \mathcal{L}^{2\alpha,1}(\mathbb{R}^n). $$
Then $\tilde{H}$ generates an analytic semigroup and hence can be represented as for certain  proper path $\Gamma$
$$ e^{-|z| \tilde{H}}=\frac{1}{2\pi i} \int_{\Gamma} e^{\mu |z|} (\mu+ \tilde{H})^{-1} d\mu.$$

Moreover there exist large enough  $\omega>0$ and $ \varepsilon>0$ such that the following holds for $\mu\in \omega+ \Sigma_{\pi-|\theta|}=\{z: |\arg z|<\pi-|\theta|  \}$
\begin{align*}
   & \| e^{i\theta}V( \mu+ e^{i\theta} (-\Delta)^{\frac{\alpha}{2}} )^{-1} \|_{L^1, L^1} \\
   & \le \varepsilon \| e^{i\theta} (-\Delta)^{\frac{\alpha}{2}} (\mu+ e^{i\theta} (-\Delta)^{\frac{\alpha}{2}})^{-1} \|_{L^1,L^1}+C_\varepsilon \| (\mu+e^{i\theta}(-\Delta)^{\frac{\alpha}{2}})^{-1} \|_{L^1, L^1}\\
   &<\frac{1}{2}.
\end{align*}
As a result, for $\mu\in \omega+ \Sigma_{\pi-|\theta|}$ we have
$$ (\mu+\tilde{H})^{-1}=\sum_{j=0}^{\infty} (-1)^j (\mu+ e^{i\theta} (-\Delta)^{\frac{\alpha}{2}})^{-1} (e^{i\theta}V(\mu+e^{i\theta} (-\Delta)^{\frac{\alpha}{2}})^{-1} )^j ,$$
and
$$ (\mu+\tilde{H})^{-1}- \sum_{j=0}^{N} (-1)^j (\mu+ e^{i\theta} (-\Delta)^{\frac{\alpha}{2}})^{-1} (e^{i\theta}V(\mu+e^{i\theta} (-\Delta)^{\frac{\alpha}{2}})^{-1} )^j =(-1)^{N+1} r_N(\mu), $$
where $r_N(\mu)=(\mu+ e^{i\theta} (-\Delta)^{\frac{\alpha}{2}})^{-1} (e^{i\theta}V(\mu+e^{i\theta} (-\Delta)^{\frac{\alpha}{2}})^{-1} )^N e^{i\theta}V (\mu+\tilde{H})^{-1} .$

Then $r_N(\mu)$ is an analytic function satisfying
$$ \sup \{ \| (\mu-\omega) r_N(\mu) \|_{L^1, L^1} : \quad \mu \in \omega+ \Sigma_{\pi- |\theta|}  \}   \le C 2^{-N}.  $$
It follows that
$$ \| \int_\Gamma e^{\mu t} r_N(\mu) d\mu \|_{L^1, L^1} \le C 2^{-N} e^{\omega |z|} \rightarrow 0 \hspace{0.5cm} \text{as} ~~~~~  N\rightarrow \infty , $$
where $\Gamma= \Gamma_0+\Gamma_{\pm},$ $\Gamma_0=\{ \mu :~~ \mu = w+ \delta e^{i\psi}, |\psi| \le \theta_1+\frac{\pi}{2} \}$ and $ \Gamma_{\pm}=\{ \mu :~~ \mu = w+ r e^{\pm i(\theta_1+\frac{\pi}{2})}, r\ge \delta \} $ $(0<\theta_1<\theta, \delta>0).$
Then we obtain
$$ \sum_{j=0}^{N} (-1)^j \int_\Gamma e^{\mu |z|} (\mu+ e^{i\theta} (-\Delta)^{\frac{\alpha}{2}})^{-1} (e^{i\theta}V(\mu+e^{i\theta} (-\Delta)^{\frac{\alpha}{2}})^{-1} )^j d\mu \rightarrow \int_\Gamma e^{\mu |z|} (\mu+ \tilde{H})^{-1} d\mu , $$
in operator norm on $L^1(\mathbb{R}^n)$ as $N$ goes to infinity. By the uniqueness of Laplace transforms, it is sufficient to prove  $(\mu+ e^{i\theta} (-\Delta)^{\frac{\alpha}{2}})^{-1} (e^{i\theta}V(\mu+e^{i\theta} (-\Delta)^{\frac{\alpha}{2}})^{-1} )^j$ and the Laplace transform of  $T_j(|z|)$ coincide.

For $j\ge1$, let
$$ R_j(\mu, x, y)=\int_{\mathbb{R}^n} R_{j-1}(\mu, x, y)e^{i\theta}R_0(\mu, x, y) dz, $$
where $R_0(\mu, x, y)=R(\mu, x, y)=(\mu-e^{i\theta}(-\Delta)^{\frac{\alpha}{2}})^{-1}.$

To start with, we have
\begin{align*}
  |R_0(\mu, x, y)| & =\int_0^\infty e^{-t\mu} \tilde{K}_0(t,x,y) dt   \\
     & \le \int_0^\infty e^{-t\mu} \tilde{I}(t,x,y) dt\\
     & \le D_3 \eta J(\mu^{-1}, x-y).
\end{align*}
Therefore by induction
$$ |R_j(\mu, x, y)|\le C \tilde{K}_V(\mu^{-1})^{j} \eta J(\mu^{-1}, x-y). $$
It follows that $R_j(\mu, x, y)$ is well defined for each $j$ and is actually the kernel of the operator $(\mu+e^{i\theta}(-\Delta)^{\frac{\alpha}{2}})^{-1}(e^{i\theta}V(\mu+e^{i\theta}(-\Delta)^{\frac{\alpha}{2}})^{-1})^j$. Then we have
\begin{align*}
   & \int_0^\infty e^{-t\mu} \tilde{K}_{j+1}(t,x,y)dt  \\
   & = \int_0^\infty e^{-t\mu} \int_{\mathbb{R}^n} \int_0^t \tilde{K}_j(t-s, x, z) e^{i\theta}V(z) \tilde{K}_0(s, z, y) ds dz dt\\
   & = \int_{\mathbb{R}^n} e^{i\theta}V(z) dz \int_0^\infty e^{-t\mu} \tilde{K}_j(t,x,z)dt \int_0^\infty e^{-s\mu} \tilde{K}_0(s,z,y)ds\\
   & = \int_{\mathbb{R}^n} R_j(\mu, x,z) e^{i\theta}V(z)R(\mu, z, y) dz =R_{j+1}(\mu, x, y).
\end{align*}
We have used the Fubini's Theorem in the second step which is due to the fact
$$ \int_0^\infty e^{-t\mu} |\tilde{K}_j(t,x, y)| dt \le C \tilde{K}_V(\mu^{-1})^{j} \eta J(\mu^{-1}, x-y).  $$
Finally we obtain
\begin{align*}
  \int_0^\infty e^{-t\mu} T_j(t) f(x) dt & = \int_0^\infty e^{-t\mu} \int_{\mathbb{R}^n} \tilde{K}_j(t, x, y) f(y) dy dt  \\
   & =\int_{\mathbb{R}^n} R_j(\mu,x,y)f(y)dy \\
   &=(\mu+e^{i\theta}(-\Delta)^{\frac{\alpha}{2}})^{-1}(e^{i\theta}V(\mu+e^{i\theta}(-\Delta)^{\frac{\alpha}{2}})^{-1})^jf(x).
\end{align*}
We have used the fact in the second step
\begin{align*}
  \int_{\mathbb{R}^n} \int_0^\infty |e^{-t\mu}\tilde{K}_j(t,x,y)|dtdy & \le C \tilde{K}_V(\mu^{-1})^j \eta \int_{\mathbb{R}^n} J( \mu^{-1}, x-y)dy \\
   & \le C \tilde{K}_V(\mu^{-1})^j \frac{\eta}{\mu}.
\end{align*}
Thus we have proved the lemma.
\end{proof}
Now we are ready to prove Theorem 1.3 for $0<\alpha<1$.
\begin{proof}[Proof of (1) of Theorem 1.3]
For $0<\varepsilon<1$, set $$ V^\varepsilon =\sup \{ \sigma \le 1 : ~~  t\in (0, \sigma), \omega \tilde{K}_V(t) \le \varepsilon   \} .$$
Denote $$ T(|z|)f(x)= \int_{\mathbb{R}^n} \tilde{K}(|z|,x,y) f(y) dy, $$ where $\tilde{K}(|z|,x,y)=\sum_{j\ge0} \tilde{K}_j(|z|,x,y)$. Thus by Lemma 3.2, we have
$$ |\tilde{K}(|z|, x, y) |\le \sum_{j=0}^{\infty} D_2 (\omega \tilde{K}_V(|z|) )^j \tilde{I} (|z|, x-y) \le \frac{D_2}{1-\varepsilon} \tilde{I} (|z|, x-y), $$
and for $0<|z|<V^\varepsilon$
$$ \lim_{N\rightarrow \infty} \| T(|z|)- \sum_0^N (-1)^j T_j(|z|) \|_{L^1, L^1}=0. $$
Then by Lemma 3.3 we conclude that $\tilde{K}(|z|, x, y)$ coincides with $K(z,x,y)$ which is the kernel of $e^{-z((-\Delta)^{\frac{\alpha}{2}} + V  )}$ for $0<|z|<V^\varepsilon.$ Now we will pass the estimates above to the general case $|z|>0$. Then for $|z| \in (V^\varepsilon, 2V^\varepsilon)$ we have by semigroup property
$$ K (z,x,y)= \int_{\mathbb{R}^n} \tilde{K}\left(\frac{|z|}{2},x, \zeta \right) \tilde{K}\left(\frac{|z|}{2}, \zeta ,y \right) d\zeta.   $$
It follows that
\begin{align*}
  |K(z,x,y) | & \le \left( \frac{D_1}{1-\varepsilon} \right)^2 \tilde{I} (|z|, x-y) \int_{\mathbb{R}^n} \left|\tilde{K}\left(\frac{|z|}{2},x,\zeta \right)\right|+ \left|\tilde{K}\left(\frac{|z|}{2},\zeta ,y \right) \right| d\zeta   \\
   & \le 2D_3D_5 \left( \frac{D_1}{1-\varepsilon} \right)^2 \tilde{I} (|z|, x-y),
\end{align*}
where $D_5= \int_{\mathbb{R}^n} \tilde{I}(|z|,x-y) dy $ is independent of $|z|$ and $x$.

By inductive argument, we have for $|z| \in (2^{n-1}V^\varepsilon, 2^n V^\varepsilon )$
$$ | K(z,x,y) | \le \frac{1}{2D_3D_5} \left(  \frac{2D_1 D_3 D_5}{1-\varepsilon} \right)^{2^n} \tilde{I} (|z|, x-y).  $$
Let $\mu_{\varepsilon, V} = \frac{2 \ln A}{V^\varepsilon}$ where $A= \frac{2D_1 D_3 D_5}{1-\varepsilon}$ and we obtain
$$ | K(z,x,y) | \le \frac{1}{2D_3D_5} e^{\mu_{\varepsilon, V}|z|}  \tilde{I} (|z|, x-y). $$
Thus we have completed the proof.
\end{proof}

\section{Appendix}
In this section, we gather some facts about the Bessel functions as well as the auxiliary functions which are frequently used.

Denote by $J_\nu(z)$ the bessel function for $\Re \nu>-\frac{1}{2}$ and $|\arg z|<\pi$  which can be defined by (\cite[p.211]{NU})
\begin{equation}\label{Bessel def}
   J_\nu(z)=\sum_{k\ge 0} a_k z^{\nu+2k}, \hspace{1cm} \text{with}~~~~ a_k=\frac{(-1)^k 2^{1-2k-\frac{n}{2}}}{k! \Gamma(k+\frac{n}{2})}.
\end{equation}
Moreover, we have the  asymptotic development of $J_\nu(z)$ as $z\rightarrow \infty$ (\cite[p.209]{NU})
\begin{equation}\label{Bessel pro2}
  J_\nu(z)= \frac{1}{2}[H^{(1)}_\nu(z)+H^{(2)}_\nu(z)] \sim z^{-\frac{1}{2}}e^{iz} \sum_{k\ge0} b_k z^{-k} +  z^{-\frac{1}{2}}e^{-iz} \sum_{k\ge0} b'_k z^{-k},
\end{equation}
where $b_k=(\frac{1}{2\pi})^{\frac{1}{2}}e^{-i(\frac{\pi \nu}{2}+\frac{\pi}{4})} \frac{i^k\Gamma(\nu+\frac{1}{2} + k)}{ 2^k k! \Gamma(\nu+\frac{1}{2} - k)}$ and
$b'_k=(\frac{1}{2\pi})^{\frac{1}{2}}e^{i(\frac{\pi \nu}{2}+\frac{\pi}{4})} \frac{(-i)^k\Gamma(\nu+\frac{1}{2} + k)}{ 2^k k! \Gamma(\nu+\frac{1}{2} - k)}$. The above expansion holds in the sense that
$$ \sum_{k\ge N}b_k z^{-k} \triangleq \frac{1}{2} z^{\frac{1}{2}}e^{-iz} H^{(1)}_\nu(z) - \sum_{k=0}^{N-1} b_k z^{-k} = O(z^{-N})\hspace{0.3cm} \text{as} \hspace{0.3cm} |z|\rightarrow \infty; $$
$$ \sum_{k\ge N}b'_k z^{-k} \triangleq \frac{1}{2} z^{\frac{1}{2}}e^{iz} H^{(2)}_\nu(z) - \sum_{k=0}^{N-1} b'_k z^{-k} = O(z^{-N})\hspace{0.3cm} \text{as} \hspace{0.3cm} |z|\rightarrow \infty. $$

In our proof, the following properties of the auxiliary functions have been used.
\begin{lemma}
Set $h(s)=(-e^{i\theta}\alpha s^{\alpha-1}+i )^{-1}$ for $s, \alpha>0, 0<\omega\le \theta<\frac{\pi}{2}$. Then for nonnegative integer $\gamma $ there exists constant $C_\gamma>0$ such that
$$ |h^{(\gamma)}(s)|\le C_{\gamma} s^{-\gamma}, \hspace{0.5cm} \forall ~~ 0< s < \delta s_0,~~\text{and} ~~  s>\frac{s_0}{\delta},  $$
where $0<\delta<1$ and $s_0=(\alpha \sin \theta)^{\frac{1}{1-\alpha}}$.
\end{lemma}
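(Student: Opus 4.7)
The plan is to rescale by $s_0$, which diagonalizes the $\theta$-dependence and turns the problem into a scale-invariant one. Setting $t = s/s_0$ and using $s_0^{\alpha-1} = 1/(\alpha\sin\theta)$, the denominator factors cleanly:
\[
 g(s) := -e^{i\theta}\alpha s^{\alpha-1} + i = i - \frac{e^{i\theta}}{\sin\theta}\, t^{\alpha-1} = i - (\cot\theta + i)\,t^{\alpha-1} =: \tilde{g}(t).
\]
Hence $h(s) = F(t)$ with $F(t) := 1/\tilde{g}(t)$, and $h^{(\gamma)}(s) = s_0^{-\gamma} F^{(\gamma)}(t)$. The bound $|h^{(\gamma)}(s)| \le C_\gamma s^{-\gamma}$ is therefore equivalent to
\[
 |F^{(\gamma)}(t)| \le C_\gamma\, t^{-\gamma}, \qquad t \in (0,\delta) \cup (1/\delta,\infty),
\]
with the constant independent of $\theta \in [\omega,\pi/2)$.

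Next I would apply Faà di Bruno to the composition $F = (1/x)\circ \tilde{g}$. A direct differentiation gives $|\tilde{g}^{(j)}(t)| \le C_j\, t^{\alpha-1-j}$ for $j \ge 1$, where $C_j$ depends only on $\alpha$ and on $1/\sin\theta \le 1/\sin\omega$. Each term of $F^{(\gamma)}(t)$ is of the form $C\,\tilde{g}^{-(k+1)}\prod_j (\tilde{g}^{(j)})^{k_j}$ with $\sum_j j k_j = \gamma$ and $k = \sum_j k_j$, which is therefore bounded by $C\,t^{(\alpha-1)k - \gamma}/|\tilde{g}(t)|^{k+1}$. The problem reduces to proving
\[
 \frac{t^{(\alpha-1)k}}{|\tilde{g}(t)|^{k+1}} \le C, \qquad t \in (0,\delta)\cup(1/\delta,\infty),
\]
for each fixed $k \ge 0$, uniformly in $\theta$.

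For the lower bound on $\tilde{g}$ I would use the identity $|\tilde{g}(t)|^2 = \cot^2\theta\, t^{2(\alpha-1)} + (1 - t^{\alpha-1})^2 \ge (1 - t^{\alpha-1})^2$. Writing $v := t^{\alpha-1}$, the estimate reduces to checking that $v^k/|1-v|^{k+1}$ is bounded on the $v$-region corresponding to $t \in (0,\delta) \cup (1/\delta,\infty)$. A short case analysis handles the four sub-cases ($t$ small vs.\ large, crossed with $\alpha > 1$ vs.\ $\alpha < 1$): in each case $v$ lies in an interval of the form $(0,\delta^{|\alpha-1|})$ or $(\delta^{-|\alpha-1|},\infty)$, bounded away from the pole $v=1$ by a constant depending only on $\alpha,\delta$; on the unbounded tail one uses $|1-v| \ge v(1 - 1/v) \gtrsim v$ to get $v^k/|1-v|^{k+1} \lesssim 1/v$, which is likewise bounded.

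The main obstacle is only bookkeeping of the four cases and verification that $\theta$ does not sneak back into the constants. Control of the $\theta$-dependence is in fact automatic after the rescaling: the $\cot\theta$ contribution to $\tilde{g}$ only enlarges $|\tilde{g}|$ and can be dropped from the lower bound, while the $1/\sin\theta$ factor appearing in each $|\tilde{g}^{(j)}|$ is dominated by the $\theta$-independent constant $1/\sin\omega$. Once these are observed, the argument is elementary.
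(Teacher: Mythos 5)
Your proof is correct, and it rests on the same two analytic facts as the paper's argument: the lower bound $|{-e^{i\theta}\alpha s^{\alpha-1}+i}|\ge |1-(s/s_0)^{\alpha-1}|$ coming from the imaginary part, which off the $\delta$-neighbourhood of $s_0$ is both bounded away from zero and comparable to $1+(s/s_0)^{\alpha-1}$, together with the derivative bounds on the denominator. Your rescaling to $t=s/s_0$ plus Fa\`a di Bruno is just a tidier packaging of the paper's induction (which uses $h'=e^{i\theta}\alpha(\alpha-1)s^{\alpha-2}h^2$ and the bounds $|h|\le C_0$, $|s^{\alpha-1}h(s)|\le C$), so I would count this as essentially the same approach.
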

\begin{proof}
It is direct to check that
$$ |h(s)|\le |\alpha \sin \theta s^{\alpha -1} -1|^{-1} \le C_0    $$
for each $ 0< s < \delta s_0 $,  $  s>\frac{s_0}{\delta}$ and $\alpha>0, \alpha\neq1$. Since $h'(s)= e^{i\theta}\alpha(\alpha-1)s^{\alpha-2}h^2(s)$, we obtain for $\alpha>0, \alpha\neq1$
$$ |h'(s)| \le \alpha|\alpha-1|  |s^{-1} h(s)| |s^{\alpha-1}h(s)|\le C'_1  s^{-1}|h(s)|\le C_1 s^{-1}  $$
where $ 0< s < \delta s_0 $ or  $  s>\frac{s_0}{\delta}$. Then for $\gamma\ge 2$ we have
$$ h^{(\gamma)}(s)=(h')^{(\gamma-1)}(s)=\sum_{k_1, k_2, k_3} c_{k_1, k_2, k_3} s^{\alpha-2-k_1}h^{(k_2)}(s) h^{(k_3)}(s)  $$
where $k_1, k_2, k_3 \ge 0$ and $k_1+ k_2 + k_3=\gamma-1$. Since we have proved $|h'(s)|\le C'_1 s^{-1} |h(s)|$,  by induction, we have
$$ |h^{(\gamma)}(s)|\le C'_{\gamma}s^{-\gamma} |h(s)| $$
for $ 0< s < \delta s_0 $,  $  s>\frac{s_0}{\delta}$ and $\alpha>0, \alpha\neq1$. Thus the result follows.
\end{proof}
Specifically when $\gamma=1, 0<\alpha<1 $,  we also have for $0<s < \delta s_0$
$$ |h'(s)|\le C s^{-\alpha} s^{2(\alpha-1)} |h^2(s)| \le C' s^{-\alpha}.  $$


\bibliographystyle{amsplain}

\end{document}